\documentclass[11pt]{amsart}
\usepackage{latexsym,amssymb,amsmath}
\usepackage[colorlinks=true, linkcolor=blue, anchorcolor=black, citecolor=blue, filecolor=blue, menucolor= blue, urlcolor=black]{hyperref}
\usepackage{graphicx}
\usepackage{float}
\usepackage{faktor}
\usepackage[all]{xy}
\usepackage{tikz}
\usepackage{minted}
\usepackage{amsthm}
\usepackage[numbers,sort&compress]{natbib}
\usetikzlibrary{shapes.geometric, fit, positioning}

\newtheoremstyle{myplain}
  {6pt}
  {6pt}
  {\itshape}
  {}
  {\bfseries}
  {.}
  { }
  {\thmname{#1}\thmnumber{ #2}\thmnote{. {\normalfont #3}}}

\newtheoremstyle{mydefinition}
  {6pt}
  {6pt}
  {\normalfont}
  {}
  {\bfseries}
  {.}
  { }
  {\thmname{#1}\thmnumber{ #2}\thmnote{. {\normalfont #3}}}

\theoremstyle{myplain}
\newtheorem{theorem}{Theorem}[section]
\newtheorem{lemma}[theorem]{Lemma}
\newtheorem{proposition}[theorem]{Proposition}
\newtheorem{corollary}[theorem]{Corollary}
\newtheorem{conjecture}[theorem]{Conjecture}

\theoremstyle{mydefinition}
\newtheorem{definition}[theorem]{Definition}

\newtheorem{remark}[theorem]{Remark}
\newtheorem{example}[theorem]{Example}

\usepackage{booktabs}
\usepackage{tabularx}
\usepackage{array}

\newcommand{\set}[1]{\left\{ #1 \right\}}

\newcommand{\Ass}{\operatorname{Ass}}

\newcommand{\PN}{\mathtt{P}_{\mathtt{N}}}
\newcommand{\Nt}{\mathtt{N}}
\newcommand{\Nc}{\mathcal{N}}
\newcommand{\vt}{\mathtt{v}}
\newcommand{\regt}{\mathtt{reg}}

\newif\ifcomment
\commentfalse % Uncomment to not show any comments

\begin{document}

%%%%%%%%%%%%%%%%%%%%%%%%%%%%%%%%%%%%%%%%%%%%%%%%%%%%%%%%%%%%%%%%%%%%%

\title[Private neighbors, perfect codes, $\vt$-number, and closed neighborhood ideals]{Private neighbors, perfect codes, and their relation with the $\vt$-number of closed neighborhood ideals}

\author[Jaramillo-Velez]{Delio Jaramillo-Velez}
\address{(Jaramillo-Velez) 
Dpto. Matemáticas, Estadística e Investigación Operativa.
Instituto Universitario de Matemáticas y Aplicaciones (IMAULL). 
Universidad de La Laguna. Apartado de Correos 456. 38200 La Laguna, Tenerife, Spain,
and\\
Department 
of Mathematics\\
Virginia Tech\\
Blacksburg, VA, USA\\
}
\email{djaramil@ull.es, delio@vt.edu}

\author[L\'opez]{Hiram H. L\'opez}
\address{(L\'opez) Department of Mathematics\\
Virginia Tech\\
Blacksburg, VA, USA}
\email{hhlopez@vt.edu}

\author[San-Jos\'e]{Rodrigo San-Jos\'e}
\address{(San-Jos\'e) Department of Mathematics\\
Virginia Tech\\
Blacksburg, VA, USA}
\email{rsanjose@vt.edu}

\thanks{Delio Jaramillo-Velez was partially supported by the NSF grant DMS-2401558. Hiram H. L\'opez was partially supported by the Commonwealth Cyber Initiative and by the NSF grants DMS-2401558 and DMS-2502705. Rodrigo San-José was partially supported by the Commonwealth Cyber Initiative, the NSF grant DMS-2401558, and by Grant PID2022-137283NB-C22 funded by MICIU/AEI/10.13039/501100011033 and by ERDF/EU}
\keywords{Closed neighborhood ideal, $\vt$-number, Castelnuovo-Mumford regularity, private neighbors, dominating set, perfect code.}
\subjclass[2020]{Primary 05E40, 05C69; Secondary 94B05}

\maketitle

\begin{abstract}
In this work, we investigate the connections between dominating sets, private neighbors, and perfect codes in graphs, and their relationships with commutative algebra. In particular, we estimate the $\vt$-number of closed neighborhood ideals in terms of minimal dominating sets and private neighbors. We show how the $\vt$-number is related to other graph invariants, such as the cover number, domination number, and matching number. Moreover, we explore the relation with the Castelnuovo-Mumford regularity, proving that the $\vt$-number is a lower bound for the regularity of bipartite, very well-covered, and chordal graphs. Finally, drawing from the relation between efficient dominating set and perfect codes, we use the redundancy of Hamming codes to present lower and upper bounds for the $\vt$-number of some special family of graphs.
\end{abstract}

%%% section 1 introduction

\section{Introduction}
A dominating set in a graph is a subset of vertices $D$ such that any other vertex of the graph has a neighbor in $D$. A large part of the interest in dominating sets comes from its wide range of connections across many areas, including chemistry, computer communication networks, facility location, social networks, surveying, monitoring electrical power networks, genetics, coding theory, and several other branches of mathematics; see~\cite{domination_book}. As an introductory example of these connections, consider a graph representing a communication network in which information travels along the edges. Then, assume that transmitters are placed on a set of vertices $D$ and that $u$ is an external private neighbor of $D$, meaning that $u$ has only one neighbor $v$ in $D$. Then, removing $v$ from $D$ would directly affect the network's communication because no other transmitter in $D \setminus \{v\}$ would reach $u$. In this sense, we say that $D$ minimally dominates the communication of the network~\cite{generalized_ramsey}. The previous example illustrates the importance of dominating sets and also introduces the relevant concept of private neighbors. These vertices are such that make the set $D$ minimal with respect to the property of domination~\cite{intro_private_nei,privatecube,prmaximizing}. In this work, we show relationships between dominating sets, private neighbors, perfect codes of a graph, and commutative algebra. We do this by analyzing the $\vt$-number of closed neighborhood ideals.

The Vasconcelos number, or just $\vt$-number, is an algebraic invariant of a graded ideal $I$ in a polynomial ring $S:=\mathbb{K}[t_1,\dots,t_n]$ over a field $\mathbb{K}$, defined as
\begin{equation}\label{vasconcelos}
\vt(I) := \min \left\{ d \geq 0 \;\middle|\; \exists \ f \in S_d \text{ and } \mathfrak{p} \in \Ass(I) \text{ with } (I:f)=\mathfrak{p} \right\},
\end{equation}
where $S_d$ denotes the $d$-th graded component of $S$, $\Ass(I)$ denotes the set of associated primes of $I$, and $(I:f)$ is the colon ideal.
The $\vt$-number was introduced by Cooper, Seceleanu, Toh\u{a}neanu, Vaz Pinto, and Villarreal in connection with the study of the asymptotic behavior of the minimum distance of projective Reed–Muller-type codes~\cite{intro-vnumber, Villarreal2026}. This algebraic invariant is tied to indicator functions, which arise in coding theory~\cite{dualcode,sorensenprojective}, Cayley–Bacharach schemes~\cite{geramitacayley}, and interpolation problems~\cite{kreuzercomputational}.

The $\vt$-number has served as a rich source of algebraic interpretations of graph invariants thanks to ideals associated with graphs. One of the most well-known of these ideals is the edge ideal $I(G)$, which is essentially generated by the edges of the graph $G$; see~\cite{introedg}. Here, the $\vt$-number of $I(G)$ corresponds to the minimal cardinality of an independence set of $G$ such that its neighbors are a minimal vertex cover~\cite{vnumberedge, induced}. Such a characterization provides algebraic conditions to derive conclusions about the graph~\cite{vnumberedge}; for example, an algebraic classification of $W_2$ graphs. For the case of binomial edge ideals, the $\vt$-number at the first minimal prime corresponds with the connected domination number~\cite{v-bino2,v-bino1}, giving thus a relation between the $\vt$-number and the concept of domination.

Motivated by the relations between commutative algebra and graph theory, in this paper, we establish a combinatorial expression of the $\vt$-number of closed neighborhood ideals; see Theorem~\ref{t:v_number_formula}. The closed neighborhood ideal of a graph $G$ is generated by the closed neighborhoods of the vertices of $G$. These ideals can be viewed as a generalization of edge ideals, but instead of considering only one neighbor per vertex, we consider its entire neighborhood~\cite{def-NG}. Recently, neighborhood ideals have attracted considerable attention, and researchers have presented several combinatorial-algebraic results that include an expression for the primary decomposition, which depends on the minimal dominating sets~\cite{irred-decom-NG}. Moreover, there has been considerable interest in the combinatorial estimation of homological invariants of neighborhood ideals, including the Castelnuovo–Mumford regularity and the projective dimension~\cite{homologicalinvar,castelnuovomatch,BM_reso_closed}.

In Section~\ref{sec:prel}, we introduce notation and preliminary results regarding the main subjects of this work: graph theory, commutative algebra, and perfect codes. We refer the reader to Table~\ref{tab:notation} for a summary of the notation we use in this work.

Section~\ref{sec:v-number} is devoted to presenting a combinatorial expression for the $\vt$-number of closed neighborhood ideals, which corresponds to Theorem~\ref{t:v_number_formula}. This expression is purely in terms of minimal dominating sets and their associated private neighbors.

In Section~\ref{sec:invariants}, we present relations between the $\vt$-number and several other graph invariants such as the cover number, the domination number, and the matching number; see Theorem~\ref{thm:cover-v}. We address the relation between the $\vt$-number and the Castelnuovo-Mumford regularity. We show that the $\vt$-number is a lower bound for several families of graphs, and conjecture the general case; see Theorem~\ref{thm:vreg} and Conjecture~\ref{conj:vnumber-reg}. This is motivated by the now classical case of edge ideals, where the $\vt$-number can be arbitrarily greater than the regularity~\cite{civan_v_number}.

In Section~\ref{sec:perfect-codes}, we consider perfect codes in the vector space $\mathbb{F}_q^m$, where $\mathbb{F}_q$ is a finite field. A code $C \subseteq \mathbb{F}_q^m$ is $e$-perfect if for every element $v \in \mathbb{F}_q^m$ there is a unique element $w$ in $C$ such that $w$ and $v$ differ in at most $e$ entries. By representing the vector space $\mathbb{F}_q^m$ as a specific graph $\Gamma(m,q)$, the problem of the existence of $e$-perfect codes corresponds with finding special domination sets, called efficient dominating sets, with the property that the neighborhoods of their elements form a partition of the graph~\cite{prmaximizing,perfectin}. The problem of the existence of $e$-perfect codes was solved by Tiet{\"a}v{\"a}inen, who proved that the only nontrivial perfect codes are the $1$-error-correcting Hamming codes together with two exceptional codes first discovered by Golay~\cite{existing_perfe}. We connect the $\vt$-number to coding theory by presenting lower and upper bounds for the $\vt$-number of the closed neighborhood ideal $\Nc_{\Gamma\left(\frac{q^r-1}{q-1}, q \right)}$ in terms of the redundancy of the existent Hamming code; see Theorem~\ref{thm:Hammingdomination}.

In Appendix~\ref{sec:computer_prog}, we provide the Sage code used for implementations of the closed neighborhood ideals, $\vt$-number formula, and computations of the Castelnuovo-Mumford regularity~\cite{sagemath}.

\section{Preliminaries}\label{sec:prel}
In this section, we present terminology and preliminary results. We divide the section into three parts: graph theory, commutative algebra, and coding theory. Additionally, Table~\ref{tab:notation} summarizes all of the notation. 

\subsection{Graph theory}
A \textit{graph} $G$ is a pair $(V(G), E(G))$, where $V(G)$ is a finite set and \[E(G) = \{\{u,v\} \mid u,v \in V(G) \}.\]
The elements of $V(G)$ and $E(G)$ are called vertices and edges of $G$, respectively. We consider only \textit{simple graphs}, i.e., graphs without multiple edges or loops. We say that a graph is \textit{connected} if for every pair of vertices there is a path between them. A graph $H$ is a subgraph of $G$ if $V(H) \subseteq V(G)$ and $E(H) \subseteq E(G)$. A subgraph $H$ of $G$ is called \textit{induced} if
\[
E(H) = \{ \{u,v\} \in E(G) \mid u,v \in V(H) \}.
\]

For an edge $e = \{u,v\} \in E(G)$, we say that $e$ is incident to $u$ and $v$, $u$ is adjacent to $v$, or $u$ is a neighbor of $v$.

Let $D \subseteq V(G)$ be a set of vertices and $M \subseteq E(G)$ a set of edges of a graph $G$.
\begin{itemize}
    \item The set $D$ is a \emph{vertex cover} of $G$ if every edge of $G$ is incident with at least one vertex in $D$. A vertex cover is \emph{minimal} if it is minimal with respect to inclusion. The \emph{vertex cover number} of $G$, denoted by $\tau(G)$, is the minimum cardinality of a vertex cover of $G$. 
    \item The set $D$ is an \emph{independent set} of $G$ if no two vertices in $D$ are adjacent. An independent set is \emph{maximal} if it is maximal with respect to inclusion. The \emph{independence number} of $G$, denoted by $i(G)$, is the maximum cardinality of an independent set. 
    \item The set $M$ is a \emph{matching} of $G$ if no two edges in $M$ share a vertex. The \emph{matching number} of $G$, denoted by $a(G)$, is the maximum cardinality of a matching of $G$. 
\end{itemize}

A graph $G$ is called chordal if any cycle of length $m\geq 4$ in $G$ has a chord, which is an edge that is not part of the cycle but connects two vertices of the cycle. The graph $G$ is called well-covered if all maximal independent sets have the same cardinality, and it is called very well-covered if it is well-covered, without isolated vertices, and such that the size of any minimal vertex cover (maximal independent set) is half of the number of vertices of the graph.

For a vertex $v \in V(G)$, we define:
\begin{itemize}
    \item The \textit{neighborhood} of $v$, denoted by $\mathtt{N}(v)$, is the set of all neighbors of $v$.
    \item The \textit{closed neighborhood} of $v$ is given by
    $\mathtt{N}[v] := \mathtt{N}(v) \cup \{v\}.$
\end{itemize}
Similarly, for a set of vertices $D \subseteq V(G)$, we define:
\begin{itemize}
    \item The \textit{neighborhood} of $D$ is
    $\mathtt{N}(D) := \bigcup_{v \in D} \mathtt{N}(v).$
    \item The \textit{closed neighborhood} of $D$ is
    $\mathtt{N}[D] := \mathtt{N}(D) \cup D.$
\end{itemize}

\subsubsection{Private neighbors}
The concept of private neighbors in graphs was introduced by Cockayne, Hedetniemi, and Miller in 1978~\cite{intro_private_nei}. Let $v \in V(G)$ be a vertex and $D \subset V(G)$ a set of vertices in a graph $G$. 

Assume that $v$ is a \emph{private neighbor} of $D$, meaning that $|\mathtt{N}[v] \cap D| = 1$.
\begin{itemize}
        \item If $v \notin D$, then $v$ is called an \emph{external private neighbor} of $D$.
        \item If $v \in D$ and $\deg(v)=0$ in the induced graph $G[D]$, then $v$ is called a \emph{self-private neighbor} of $D$; see~\cite{prmaximizing}.
        \item If $v \in D$ and $\deg(v)=1$ in the induced graph $G[D]$, then $v$ is called an \emph{internal private neighbor} of $D$; see~\cite{prmaximizing}.
\end{itemize}
We denote the set of all \textit{external and self-private neighbors} of the set $D$ by $\mathtt{P}_{\mathtt{N}}(D)$. 

\begin{example}
In the graph display in Figure~\ref{fig:private_nei_exa}, we consider the set of vertices $D=\{v_1,v_4,v_6\}$. The vertex $v_1$ is a self-private neighbor of $D$, the vertices $v_5, v_7$ are external private neighbors of $D$, and the vertices $v_4, v_6$ are internal private neighbors of $D$. We can also see that
$$
\PN(D)=\{v_1,v_5,v_7\}.
$$
\begin{figure}[h]
    \centering
     \begin{tikzpicture}[scale=1.2,
    every node/.style={font=\small},
    dot/.style={circle, fill=black, inner sep=2.5pt},
    bigdot/.style={circle, fill=blue, inner sep=4pt}
]

% Vertices
\node[bigdot, label=left:$v_1$]  (v1) at (0,0) {};
\node[dot,    label=below:$v_2$] (v2) at (1.6,-1.5) {};
\node[dot,    label=above:$v_3$] (v3) at (2.4,1.5) {};
\node[bigdot, label=below:$v_4$] (v4) at (4.3,0.0) {};
\node[dot,    label=above:$v_5$] (v5) at (5.9,1.5) {};
\node[bigdot, label=below:$v_6$] (v6) at (6.9,0.0) {};
\node[dot,    label=above:$v_7$] (v7) at (8.6,1.0) {};

% Edges
\draw[thick] (v1) -- (v3);
\draw[thick] (v3) -- (v4);
\draw[thick] (v4) -- (v5);
\draw[thick] (v4) -- (v6);
\draw[thick] (v6) -- (v7);
\draw[thick] (v1) -- (v2);
\draw[thick] (v2) -- (v6);
\draw[thick] (v3) -- (v5);

\end{tikzpicture}
    \caption{Private neighbors of the set $D=\{v_1,v_4,v_6\}$.}
    \label{fig:private_nei_exa}
\end{figure}
   
\end{example}

\subsubsection{Dominating and irredundant sets}
We now introduce the concept of an irredundant set of a graph, which is related to private neighbors.

Let $D$ and $U$ be two sets of vertices of a graph $G$. We say that $D$ \emph{dominates} $U$ if 
\[
U \subseteq \Nt[D].
\]
The set $D$ is a \emph{dominating set} of $G$ if $\Nt[D] = V(G)$. The \emph{domination number} of $G$, denoted by $\gamma(G)$, is the minimum cardinality of a dominating set of $G$. A dominating set is called \textit{minimal} if it does not properly contain another dominating set. Note that if a dominating set $D$ is minimal, then each vertex $u$ of $D$ has a private neighbor, meaning that $|\Nt[u]\cap \PN(D)|\geq 1$. Therefore, if we remove any vertex from $D$, the resulting set is no longer a dominating set. A reference for dominating sets is~\cite{domination_book}.

\begin{definition}
A set of vertices $D \subset V(G)$ is an \emph{irredundant} set of $G$ if for every $u \in D$ there is $v \in V(G)$  such that 
\[
\Nt[v] \cap D = \{u\}.
\]
In other words, a set $D$ is irredundant if and only if $|\Nt[u] \cap \PN(D)|\geq 1$ for every $u\in D$. An irredundant set $D$ is \emph{maximal} if it is not properly contained in another irredundant set.
\end{definition}  

The following result, which can be found in~\cite[Cor. 10.37]{chartrand_graphs_digraphs}, relates minimal dominating sets with maximal independent sets.
\begin{proposition} \label{p:maximal_independent_is_minimal_dominating}
Every maximal independent set is a minimal dominating set.
\end{proposition}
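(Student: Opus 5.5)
Let $I$ be a maximal independent set of $G$. We show two things: that $I$ is a dominating set, and that no proper subset of $I$ is dominating. Both follow directly from the definitions, using maximality for the first and independence for the second.

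\emph{Domination.} Let $v \in V(G)$. If $v \in I$, then $v \in \Nt[I]$ trivially. If $v \notin I$, then by maximality the set $I \cup \{v\}$ is not independent. Since $I$ itself is independent, the offending edge must join $v$ to some $u \in I$. Hence $v \in \Nt(u) \subseteq \Nt[I]$. This gives $\Nt[I] = V(G)$.

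\emph{Minimality.} It suffices to show that $I \setminus \{u\}$ is not dominating for every $u \in I$, because any proper subset of $I$ is contained in such a set, and subsets of non-dominating sets are non-dominating. In the language of the paper, we show that each $u \in I$ is a self-private neighbor of $I$. Because $I$ is independent, $u$ has degree $0$ in the induced graph $G[I]$, so $\Nt[u] \cap I = \{u\}$. Therefore $u \notin \Nt[I \setminus \{u\}]$: indeed $u \notin I\setminus\{u\}$, and no vertex of $I \setminus \{u\}$ is adjacent to $u$. So $I \setminus \{u\}$ fails to dominate $u$.

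The argument is routine and I do not expect any real obstacle. The only point needing care is the reduction in the minimality step, which relies on the observation that domination is preserved under taking supersets.
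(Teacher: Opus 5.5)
Your argument is correct and complete. Maximality forces every vertex outside $I$ to have a neighbor in $I$, and independence makes each $u\in I$ a self-private neighbor, so no $I\setminus\{u\}$ dominates. The paper gives no proof of its own and only cites a corollary in Chartrand et al. That result is normally obtained the same way, via domination from maximality and then the private-neighbor criterion for minimal dominating sets, which independence satisfies trivially.
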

\begin{remark}
As a consequence of Proposition~\ref{p:maximal_independent_is_minimal_dominating}, we always have $\gamma(G)\leq i(G)$.     
\end{remark}

\subsection{Commutative algebra}
We denote the polynomial ring  over a field $\mathbb{K}$ with the standard grading by 
$$
S := \mathbb{K}[t_1, \ldots, t_n] = \bigoplus_{d=0}^{\infty} S_d.
$$
Let $I$ be a graded ideal of $S$. A prime ideal $\mathfrak{p}$ of $S$ is an \emph{associated prime} of $S/I$ if
$$
(I : f) := \mathfrak{p}
$$
for some $f \in S_d$, where
$$
(I : f) = \{ g \in S \mid gf \in I \}
$$
is the colon ideal between $I$ and $f$. The set of associated primes of $S/I$ is denoted by $\operatorname{Ass}(I)$. We recall that the $\mathtt{v}$-\emph{number} of $I$ is defined by
$$
\vt(I) := \min \{ d \geq 0 \mid \exists\, f \in S_d \text{ and } \mathfrak{p} \in \operatorname{Ass}(I) \text{ with } (I : f) = \mathfrak{p} \}.
$$

The $\vt$-number of $I$ can be also defined locally at each associated prime $\mathfrak{p}$ of $I$ by
$$
\vt_{\mathfrak{p}}(I) := \min \{ d \geq 0 \mid \exists\, f \in S_d \text{ with } (I : f) = \mathfrak{p} \}.
$$

In this work, we are interested in the relation between the $\vt$-number and the Castelnuovo-Mumford regularity, which is defined as follows. Consider the minimal graded free resolution of $S/I$ as an $S$-module:
$$
0 \to \bigoplus_j S(-j)^{b_{g,j}}
\to \cdots \to
\bigoplus_j S(-j)^{b_{1,j}}
\to S \to S/I \to 0.
$$
The \emph{Castelnuovo--Mumford regularity} of $S/I$, or simply the \emph{regularity} of $S/I$, is defined as
$$
\regt(S/I) := \max\{j-i \mid b_{i,j} \neq 0\}.
$$
The integer $g$, denoted by $\mathtt{pd}(S/I)$, is the \emph{projective dimension} of $S/I$; see~\cite{intro-vnumber, Villarreal2026}.

For the rest of the manuscript, we identify the set of variables $\{t_1, \dots, t_n\}$ with the set of vertices $V(G)$ of a graph $G$. We use the notation $\langle D\rangle$ to indicate the ideal generated by the variables or vertices in $D\subseteq V(G)$. The correspondence between variables and vertices allows us to define families of squarefree monomial ideals parametrized by subsets of vertices, such as edge ideals~\cite{edgeideal}, which is defined as follows. The \emph{edge ideal} of a graph $G$ is denoted and defined by 
$$
I(G):=\langle t_it_j\:|\: \{t_i,t_j\}\in E(G)\rangle.
$$
We are interested in the family of closed neighborhood ideals, which naturally extends the notion of edge ideal from a combinatorial perspective~\cite{def-NG}. For a set of vertices $D\subseteq V(G)$, the square-free monomial parametrized by $D$ is denoted by
\[
\mathbf{t}_D:=\prod_{t_j \in D} t_j.
\]
The \emph{closed neighborhood ideal} of $G$, denoted by $\Nc_G$, is defined as
$$
\Nc_G:=\langle \mathbf{t}_{\Nt[t_i]}\:|\: t_i\in V(G)\rangle.
$$
\begin{remark}
A generator of an edge ideal is the product of a vertex with one of its neighbors, while a generator of a closed neighborhood ideal is the product of a vertex with all of its neighbors.
\end{remark}

\subsection{Coding theory}

Let $\mathbb{F}_q$ be the finite field of order $q$, where $q = p^r$ is a prime power. We say that $C$ is an $[m,k,\delta]$ code if $C$ is a linear code over $\mathbb{F}_q$ of length $m$, dimension $k$, and minimum distance $\delta$. Observe that the code $C$ is a $k$-dimensional subspace of the vector space $\mathbb{F}_q^m$ and $|C| = q^k$. The code $C$ can correct up to $t$ errors~\cite{huffman_codes}, where
\begin{equation}\label{26.05.27}
t := \left\lfloor \frac{\delta-1}{2} \right\rfloor.    
\end{equation}

A Hamming ball $S_\rho(x)$ of radius $\rho$ with center at the vector
\[
x = (x_1, \dots, x_m) \in \mathbb{F}_q^m
\]
is defined as the set
\[
S_\rho(x) := \{y \in \mathbb{F}_q^m : \delta(x,y) \le \rho\},
\]
where $\delta(x,y)$ denotes the Hamming distance between the vectors $x$ and $y$; this is the number of coordinates in which $x$ and $y$ differ. The well-known Hamming bound can be stated as follows.
\begin{theorem} [\cite{huffman_codes}] \label{t:sphere_packing}
Let $C $ be an $[m,k,\delta]$ code and define $t$ as in Equation~(\ref{26.05.27}). Then,
$$
|C| \sum_{i=0}^{t} \binom{m}{i}(q-1)^i \le q^m.
$$
\end{theorem}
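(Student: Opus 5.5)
The plan is to derive the bound by counting, via disjoint Hamming balls. Let $C$ be an $[m,k,\delta]$ code and $t=\lfloor(\delta-1)/2\rfloor$. The first step is to show that the balls $S_t(c)$, for $c\in C$, are pairwise disjoint. Suppose $y\in S_t(c)\cap S_t(c')$ with $c\neq c'$. The Hamming distance $\delta(\cdot,\cdot)$ is a metric, and the triangle inequality is immediate coordinatewise. Hence
\[
\delta(c,c')\le \delta(c,y)+\delta(y,c')\le 2t\le \delta-1 .
\]
This contradicts the definition of $\delta$ as the minimum distance of $C$, since for a linear code this equals the minimum distance between distinct codewords. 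Note that only the minimum distance property is used, so linearity plays no essential role.

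The second step is to compute the size of a ball. A vector $y$ with $\delta(x,y)=i$ is determined by two choices. First choose the set of $i$ coordinates where $y$ differs from $x$, which can be done in $\binom{m}{i}$ ways. Then choose, at each of these coordinates, one of the $q-1$ values different from $x_j$. So there are exactly $\binom{m}{i}(q-1)^i$ such vectors, and
\[
|S_t(x)|=\sum_{i=0}^{t}\binom{m}{i}(q-1)^i ,
\]
which is independent of the center $x$.

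Finally, the union $\bigcup_{c\in C}S_t(c)$ is contained in $\mathbb{F}_q^m$, which has $q^m$ elements. By disjointness the union has $|C|\cdot|S_t(0)|$ elements, and this yields the claimed inequality.

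There is no real obstacle here. The only point needing care is the disjointness step, namely checking that $2t\le\delta-1$ follows from the floor in Equation~(\ref{26.05.27}), together with the triangle inequality for the Hamming distance.
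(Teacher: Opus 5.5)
Your proof is correct. The paper gives no proof of this bound and only quotes it from \cite{huffman_codes}; your argument (disjoint radius-$t$ balls via $2t\le\delta-1$, each of size $\sum_{i=0}^{t}\binom{m}{i}(q-1)^i$, all packed inside $\mathbb{F}_q^m$) is the standard sphere-packing argument that reference relies on.
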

If $C$ achieves the previous bound, we say that $C$ is \emph{perfect}. A perfect code can be understood as follows. Around each codeword $c\in C$, we draw a Hamming ball $S_t(c)$ containing all vectors whose Hamming distance from $c$ is at most $t$. The elements in the ball represent all possible received codewords with at most $t$ errors that can be efficiently decoded to $c$. A code is perfect if these balls do not overlap and the union of the balls covers the entire space $\mathbb{F}_q^m$. In other words, every possible received codeword lies in exactly one Hamming ball, meaning the space is filled perfectly with no gaps and no overlaps.

%%% Notation table
\begin{table}[ht]
\centering
\renewcommand{\arraystretch}{1.3}
\begin{tabularx}{\textwidth}{>{\raggedright\arraybackslash}p{0.25\textwidth} X}
\toprule
\textbf{Notation} & \textbf{Description} \\
\midrule

%%%%%%% graph  notation

$G$ & A simple graph\\

$\mathtt{N}(v)$ & Open neighborhood of the vertex $v$ \\

$\mathtt{N}[v]$ & Closed neighborhood of the vertex $v$ \\

$D^c$ & Complement of the set of vertices $D$ \\

$\mathtt{P}_{\mathtt{N}}(D)$ & Set of external and self private neighbors of $D$\\

$\gamma(G)$ & Domination number of the graph $G$\\

$\tau(G)$ & Cover number of the graph $G$\\

$i(G)$ & Independence number of the graph $G$\\

$a(G)$ & Matching number of the graph $G$\\

%%%%%%% commut-alg notation

$S$ & Polynomial ring in $n$ variables\\

$(I:f)$ & Colon ideal of $I$ and $f$ \\

$\operatorname{Ass}(I)$ & Assosiated primes of $I$ \\

$\vt(I)$ & The $\vt$-number of $I$ \\

$\mathtt{reg}(S/I)$ & Castelnuovo--Mumford regularity of $S/I$ \\

$\mathbf{t}_A$ & Square-free monomial parametrized by the variables in $A$ \\

$\langle D\rangle$ & Ideal generated by the elements in $D$ \\
  
$\Nc_G$ & Closed neighborhood ideal associated to $G$\\

$\mathbf{t}_A|\mathbf{t}_B$ & The monomial $\mathbf{t}_A$ divides the monomial $\mathbf{t}_B$\\

%%%%%%% coding theory notation

$\mathcal{H}_q(r)$ & Hamming code with redundancy $r$\\
$\delta(x,y)$  &  Hamming distance between $x$, and $y$\\

$\Gamma(m,q)$ & Hamming graph representation of $\mathbb{F}_q^{m}$\\

\bottomrule
\end{tabularx}
\caption{Table of Notation}
\label{tab:notation}
\end{table}

%%% section 3 the v-number 

\section{The $\vt$-number of neighborhood ideals}\label{sec:v-number}
In this section, we present a formula for the $\vt$-number of a closed neighborhood ideal. To this end, we employ the notion of private neighbors and leverage the following description of the minimal primes of closed neighborhood ideals.

\begin{proposition}[\cite{irred-decom-NG}]\label{propo:irred-decom-NG}
The closed neighborhood ideal $\Nc_G \subseteq S$ has the following irreducible decompositions:
\[
\Nc_G = \bigcap_{\substack{D' \text{ dominating set}}} \langle D'\rangle
    = \bigcap_{\substack{ D' \text{ minimal dominating set}}} \langle D'\rangle.
\]
Furthermore, the second intersection is an irredundant decomposition.
\end{proposition}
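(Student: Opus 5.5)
The plan is to prove both equalities at the level of squarefree monomials. Every ideal in sight is a monomial ideal: $\Nc_G$ is generated by the squarefree monomials $\mathbf{t}_{\Nt[v]}$, and each $\langle D'\rangle$ is generated by variables. An intersection of monomial ideals is again monomial, so two such ideals coincide once they contain the same monomials. Since $\Nc_G$ and every $\langle D'\rangle$ are squarefree, a monomial lies in one of them if and only if its support does. It is therefore enough to compare squarefree monomials $\mathbf{t}_A$ with $A\subseteq V(G)$.

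The key combinatorial fact is that $D'$ is dominating if and only if $D'\cap \Nt[v]\neq\emptyset$ for every $v\in V(G)$. This holds because $v\in\Nt[D']$ exactly when some $u\in D'$ lies in $\Nt[v]$, by symmetry of adjacency. We also have $\mathbf{t}_A\in\Nc_G$ if and only if $\Nt[v]\subseteq A$ for some $v$, and $\mathbf{t}_A\in\langle D'\rangle$ if and only if $A\cap D'\neq\emptyset$.

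\emph{First equality.} Suppose $\Nt[v]\subseteq A$ for some $v$. Then every dominating set $D'$ meets $\Nt[v]\subseteq A$, so $\mathbf{t}_A$ lies in every $\langle D'\rangle$. Conversely, suppose $A$ contains no closed neighborhood. Then $V(G)\setminus A$ meets every $\Nt[v]$, so it is a dominating set disjoint from $A$, and $\mathbf{t}_A\notin\langle V(G)\setminus A\rangle$. Hence $\mathbf{t}_A$ lies outside the intersection.

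\emph{Second equality.} Every dominating set $D'$ contains a minimal dominating set $D''$, and then $\langle D''\rangle\subseteq\langle D'\rangle$. So dropping the non-minimal terms does not change the intersection.

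\emph{Irredundance.} Fix a minimal dominating set $D$. For each other minimal dominating set $D''$, minimality gives $D''\not\subseteq D$, so we may choose a vertex $u_{D''}\in D''\setminus D$. Let $f$ be the product of these chosen variables. Then $f$ lies in every $\langle D''\rangle$ with $D''\neq D$. However, $f\notin\langle D\rangle$, since no variable dividing $f$ belongs to $D$. Thus omitting $\langle D\rangle$ strictly enlarges the intersection.

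Finally, each $\langle D'\rangle$ is a prime ideal generated by variables, hence irreducible, so these are irreducible decompositions. I expect no real obstacle here. The only point needing care is the reduction to supports, together with the observation that the complement of $A$ is the right dominating set to witness non-membership.
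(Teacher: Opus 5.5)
Your proof is correct. The paper gives no proof of this proposition for comparison: it imports it from \cite{irred-decom-NG} and uses it as a black box. It relies on it, together with the fact that $\Nc_G$ is squarefree and hence radical, to read off that the associated primes of $\Nc_G$ are exactly the $\langle D\rangle$ with $D$ a minimal dominating set.

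Your argument is a self-contained substitute, and it is the standard transversal (Alexander duality) argument. For any squarefree monomial ideal $\langle \mathbf{t}_{F_1},\dots,\mathbf{t}_{F_k}\rangle$, the minimal primes are the $\langle C\rangle$ with $C$ a minimal set meeting every $F_i$. Your observation that $D'$ is dominating if and only if $D'\cap \Nt[v]\neq\emptyset$ for all $v$ identifies these transversals, for $F_v=\Nt[v]$, with the dominating sets. The individual steps are all sound:
\begin{itemize}
\item the reduction to squarefree monomials $\mathbf{t}_A$;
\item the witness $V(G)\setminus A$, which is dominating and disjoint from $A$ whenever $A$ contains no closed neighborhood;
\item the product of chosen variables $u_{D''}\in D''\setminus D$ as the witness for irredundance.
\end{itemize}

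One wording fix in the irredundance step. What forbids $D''\subseteq D$ is the minimality of $D$, since a dominating $D''\subseteq D$ would force $D''=D$. It is not the minimality of $D''$. Your argument also covers the degenerate case where $D$ is the only minimal dominating set: then $f=1$, and the intersection over the empty family is $S\neq \langle D\rangle$.
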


The following relation between dominating and irredundant sets is particularly useful for our proposed formula for the $\vt$-number of closed neighborhood ideals. We add a proof of this result for completeness and clarity. 

\begin{proposition}[\cite{intro_private_nei}]\label{mini-irre}
If $D$ is minimal dominating, then $D$ is irredundant. 
\end{proposition}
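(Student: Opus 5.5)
The plan is to argue by contradiction, directly from the definitions of domination and irredundance. Let $D$ be a minimal dominating set and fix $u\in D$. By the definition of an irredundant set, it suffices to produce a vertex $v\in V(G)$ with $\Nt[v]\cap D=\{u\}$. Suppose no such $v$ exists. Then every vertex $v$ with $u\in\Nt[v]$, that is, every $v\in\Nt[u]$, satisfies $|\Nt[v]\cap D|\geq 2$. Hence every $v\in \Nt[u]$ has a neighbor or itself lying in $D\setminus\{u\}$. Note that $|\Nt[v]\cap D|\ge 1$ automatically for all $v$, since $D$ is dominating.

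The key step is to show that under this assumption $D\setminus\{u\}$ is still dominating. Take any $w\in V(G)$. If $w\notin \Nt[u]$, then since $D$ dominates $w$, some $x\in D$ has $w\in\Nt[x]$, and $x\neq u$ because $w\notin\Nt[u]$. Thus $w\in\Nt[D\setminus\{u\}]$. If $w\in\Nt[u]$, then by the assumption $\Nt[w]\cap D$ contains some element $x\neq u$. By symmetry of adjacency, $w\in\Nt[x]$, so again $w\in \Nt[D\setminus\{u\}]$. This applies in particular to $w=u$ itself: the assumption gives $u$ a neighbor in $D\setminus\{u\}$.

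Therefore $D\setminus\{u\}$ is a dominating set properly contained in $D$, contradicting minimality. Hence every $u\in D$ admits a $v$ with $\Nt[v]\cap D=\{u\}$, and $D$ is irredundant. In the paper's language, $v\in\PN(D)$ or $v$ is an internal private neighbor; either way the defining condition of irredundance holds.

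There is no real obstacle here. The only point needing care is the case $w=u$, where one must use the symmetric relation $x\in\Nt[w]\iff w\in\Nt[x]$ so that the private-neighbor condition covers $u$ itself. That is precisely the self-private case, which the paper's formulation via $\PN(D)$ is designed to include.
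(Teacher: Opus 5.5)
Your proof is correct and is essentially the paper's argument: assuming some $u\in D$ admits no $v$ with $\Nt[v]\cap D=\{u\}$, you show $D\setminus\{u\}$ still dominates, contradicting minimality. Your split into $w\in\Nt[u]$ versus $w\notin\Nt[u]$ is a tidier version of the paper's cases ($x=u$, and $x\notin D$ dominated by $u$ or not), and it also covers the vertices of $D\setminus\{u\}$ explicitly. As a harmless aside, the witness $v$ always lies in $\PN(D)$: if $v\in D$ then $v=u$ and $u$ is self-private, so the ``internal private neighbor'' alternative in your closing remark never arises.
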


\begin{proof}
We reason by contradiction. We assume the existence of a minimal dominating set $D$ that is not irredundant. Then, there exists an element $u\in D$ such that for all $v\in V(G)$, 
$$u\not\in \Nt[v] \quad \hbox{ or } \quad \Nt[v]\setminus\{u\}\not\subseteq D^c.$$ We now show that this implies that $B:=
D\setminus\{u\}$ is a dominating set, i.e., for every $x\in V(G)$, there is $y\in B$ that is adjacent to $x$.
\begin{itemize}
    \item Assume $x=u$. There is $y\in \Nt[u]\setminus\{u\}$ and $y\not\in D^c$. Thus, $u$ is adjacent to an element $y$ in $B$.
    \item Assume that $x\not= u$ and $x\not\in D$. Since $D$ is a dominating set, there is $w\in D$ with $x\in \Nt(w)$. If $w\not= u$, then $x$ is adjacent to a vertex in $B$. If $w=u$, by the contradiction assumption, we obtain that $\Nt[x]\setminus\{u\}\not\subseteq D^c$. Thus, there exists $y \in \Nt[x]\setminus\{u\}$ and $y\in D$. This implies that $x$ is adjacent to $y\in B$, and it shows that $B$ is a dominating set.
\end{itemize}
In both cases, we obtain a contradiction with the fact that $D$ is a minimal dominating set. 
\end{proof}

\begin{remark}
For monomial ideals, the $\vt$-number satisfies an additivity property analogous to that of Castelnuovo–Mumford regularity. Specifically, if $I_1 \subset R_1$ and $I_2 \subset R_2$ are monomial ideals in polynomial rings $R_1$ and $R_2$ whose sets of variables are disjoint, then
$$\vt(I_1+I_2)=\vt(I_1)+\vt(I_2),$$
where $I_1+I_2$ is viewed as an ideal in the polynomial ring generated by the variables of both $R_1$ and $R_2$~\cite{vnumber_monomials, additivity_reg}. This observation shows that, for our analysis of the v-number of closed neighborhood ideals and its connection with Castelnuovo–Mumford regularity, it suffices to consider connected simple graphs.
\end{remark}

\begin{remark}\label{rem:colon_dom}
Let $U$ and $D$ be sets of vertices such that with $(\Nc_G: \mathbf{t}_U)=\langle D\rangle$. We have that for all vertices $w\in D$, there exists a vertex $v$ such that $\Nt[v]\subseteq U\cup\{w\}$ and $\Nt[v]\not\subseteq U$ because $(\Nc_G: \mathbf{t}_U)\neq S$. This implies that $U\cap D=\emptyset$. 
\end{remark}

\begin{lemma}\label{lemma:vnumber}
    Let $\Nc_G$ be the closed neighborhood ideal of a simple connected graph $G$. The following holds.
    \begin{itemize}
        \item[(a)] For a minimal dominating set $D$ and a set $U\subseteq \mathtt{P}_{\mathtt{N}}(D)$ that dominates $D$, we have that
        \[(\Nc_G : \mathbf{t}_{\Nt[U]\setminus D}) = \langle D\rangle.
        \]
        \item[(b)] Let $D$ be a minimal dominating set and $f\in S_d$ such that
        $(\Nc_G : f) = \langle D\rangle$. There  is a set of vertices $U\subseteq \PN(D)$ that dominates $D$ such that
        \[(\Nc_G : \mathbf{t}_{\Nt[U]\setminus D}) = \langle D\rangle \quad \text{ and } \quad |\Nt[U]\setminus D|\leq \mathtt{deg}(f).\] 
    \end{itemize}
\end{lemma}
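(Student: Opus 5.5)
The plan is to prove (a) directly from the description of colon ideals of squarefree monomial ideals, and to prove (b) by reducing to a squarefree monomial and then extracting $U$ from the witnesses in Remark~\ref{rem:colon_dom}. The basic tool is that, for a set of vertices $W$, the colon $(\Nc_G:\mathbf{t}_W)$ is the monomial ideal generated by the monomials $\mathbf{t}_{\Nt[v]\setminus W}$, $v\in V(G)$.

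\emph{Part (a).} Put $W:=\Nt[U]\setminus D$; by construction $W\cap D=\emptyset$.

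For the inclusion $(\Nc_G:\mathbf{t}_W)\subseteq\langle D\rangle$, fix a generator $\mathbf{t}_{\Nt[v]\setminus W}$. Since $D$ is dominating, $\Nt[v]\cap D\neq\emptyset$. Since $W\cap D=\emptyset$, that vertex of $D$ survives in $\Nt[v]\setminus W$, so the generator lies in $\langle D\rangle$.

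For the reverse inclusion, take $w\in D$. Because $U$ dominates $D$, there is $u\in U$ with $w\in\Nt[u]$. Because $u\in\PN(D)$, we have $\Nt[u]\cap D=\{w\}$. Since $\Nt[u]\subseteq\Nt[U]$, we get $\Nt[u]\setminus W\subseteq \Nt[u]\cap D=\{w\}$, and this set contains $w$. Hence $\mathbf{t}_{\Nt[u]\setminus W}=w$ lies in the colon.

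\emph{Part (b), reduction to a monomial.} First I would replace $f$ by a squarefree monomial $\mathbf{t}_A$ with $|A|\le\deg f$ and $(\Nc_G:\mathbf{t}_A)=\langle D\rangle$. For this I would invoke the known fact for monomial ideals that if $(I:f)=\mathfrak p$ with $\mathfrak p$ prime, then some monomial $m$ of degree at most $\deg f$ satisfies $(I:m)=\mathfrak p$; this is the standard lemma used in $\vt$-number computations for monomial ideals, and I would cite it. The idea is to expand $f$ into terms, use that $\mathfrak p f\subseteq I$ with $I$ monomial, and choose a suitable term not in $I$, comparing colons termwise.

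Then I would pass from $m$ to the squarefree monomial on its support. This is harmless for the squarefree ideal $\Nc_G$, because by the generator description above the colon $(\Nc_G:m)$ depends only on the support of $m$. This reduction is the step I expect to be the main obstacle, since choosing the right term requires care. Everything afterwards is combinatorial.

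\emph{Part (b), extracting $U$.} By Remark~\ref{rem:colon_dom}, $A\cap D=\emptyset$. Moreover, for each $w\in D$ there is a vertex $v_w$ with $\Nt[v_w]\subseteq A\cup\{w\}$ and $w\in\Nt[v_w]$. It follows that $\Nt[v_w]\cap D=\{w\}$, so $v_w$ is a private neighbor of $D$:
\begin{itemize}
\item if $v_w\notin D$, it is an external private neighbor;
\item if $v_w\in D$, then $v_w=w$ and $w$ has no neighbors in $D$, so it is self-private.
\end{itemize}
Thus $v_w\in\PN(D)$.

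Set $U:=\{v_w: w\in D\}\subseteq\PN(D)$; it dominates $D$ by construction. Then $\Nt[U]\setminus D\subseteq A$, so $|\Nt[U]\setminus D|\le|A|\le\deg f$. Finally, part (a) gives $(\Nc_G:\mathbf{t}_{\Nt[U]\setminus D})=\langle D\rangle$.
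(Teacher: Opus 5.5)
Your proposal is correct and follows the paper's proof essentially step by step. Part (a) is the same two-inclusion argument, and in part (b) you likewise reduce to a single squarefree term $\mathbf{t}_A$ of $f$, use Remark~\ref{rem:colon_dom} to get $A\cap D=\emptyset$ and the witnesses $v_w$, take $U=\{v_w\}$, and conclude via part (a). The only minor differences are two. First, the paper does the monomial reduction by writing $(\Nc_G:f)=\bigcap_i(\Nc_G:\mathbf{t}^{\mathbf{c}_i})$ (valid because $\Nc_G$ is radical) and using that $\langle D\rangle$ is prime, whereas you cite the general monomial-ideal lemma. Second, you explicitly check that a witness $v_w\in D$ must be self-private, a detail the paper leaves implicit.
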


\begin{proof}
{(a)} We prove first $(\Nc_G : \mathbf{t}_{\Nt[U]\setminus D}) \subseteq \langle D\rangle$. Let $\mathbf{t}^{\mathbf{c}} = t_1^{c_1}\cdots t_n^{c_n}$ be an element in the colon ideal $(\Nc_G:\mathbf{t}_{\Nt[U]\setminus D})$. We denote by $\mathtt{supp}(\mathbf{t}^{\mathbf{c}})$ the support of $\mathbf{t}^{\mathbf{c}}$, which is the set of variables with a positive exponent. There exists a vertex $v$ such that  $$
        \mathbf{t}_{\Nt[v]} | \mathbf{t}^{\mathbf{c}}\mathbf{t}_{\Nt[U]\setminus D}.
        $$ 
        Given that $D$ is a dominating set, there exists $t_j\in D$ with 
        $$
        t_j\in \Nt[v]\subseteq\mathtt{supp}(\mathbf{t}^{\mathbf{c}}\mathbf{t}_{\Nt[U]\setminus D})=\mathtt{supp}(\mathbf{t}^{\mathbf{c}})\cup (\Nt[U]\setminus D).
        $$ 
        Thus, $t_j\in \mathtt{supp}(\mathbf{t}^{\mathbf{c}})$, which implies that $\mathbf{t}^{\mathbf{c}}\in \langle D\rangle$.
        
        We now check $(\Nc_G : \mathbf{t}_{\Nt[U]\setminus D}) \supseteq \langle D\rangle$. Let $t_j$ be an element in $D$. Since $U \subseteq \mathtt{P}_{\mathtt{N}}(D)$ and $U$ dominates $D$, there is $v_j\in U$ such that $\Nt[v_j]\cap D=\{t_j\}$. This implies that 
        $$
        \Nt[v_j] = \mathtt{supp}(t_j\mathbf{t}_{\Nt[v_j]\setminus D}) \subseteq \mathtt{supp}(t_j\mathbf{t}_{\Nt[U] \setminus D}). 
        $$
        Therefore, $t_j\in (\Nc_G:\mathbf{t}_{\Nt[U]\setminus D})$ and $\langle D\rangle\subseteq (\Nc_G:\mathbf{t}_{\Nt[U]\setminus D})$.

{(b)} Write the polynomial $f$ as sum of monomials:
\[
f = \sum_{i=1}^{r} \lambda_i \mathbf{t}^{\mathbf{c}_i},
\]
where $0 \neq \lambda_i \in \mathbb{K}$ and $\mathbf{t}^{\mathbf{c}_i}:=t_1^{c_{i1}} \cdots t_n^{c_{in}} \in S_d$ for all $i$. Then, we obtain 
\[
(\Nc_G : f) = \bigcap_{i=1}^{r} (\Nc_G : \mathbf{t}^{\mathbf{c}_i}) = \langle D\rangle,
\]
and consequently $(\Nc_G : \mathbf{t}^{\mathbf{c}_k}) = \langle D\rangle$ for some $k$.

Given that $\Nc_G$ is a squarefree monomial, we may  assume that $c_{kj} \in \{0,1\}$ for all $j$, and we consider $A := \mathtt{supp}(\mathbf{t}^{\mathbf{c}_k}).$ Therefore, 
$$(N_G : \mathbf{t}_{A}) = \langle D\rangle.$$ 
Moreover, we can assume $A\cap D=\emptyset$ by Remark \ref{rem:colon_dom}. For every $t_j\in D$, there exists $v_j\in V(G)$ such that 
$$
\mathbf{t}_{\Nt[v_j]} |t_j\mathbf{t}_{A}.
$$
This implies that $\Nt[v_j]\setminus\{t_j\}\subseteq A$. We consider the set formed by all the $v_j$'s, this is $U:=\{v_1,\dots,v_{|D|}\}$. Notice that $D\subseteq \Nt[U]$ and $|\Nt[v_j]\cap D|=1$ because $A\cap D=\emptyset$. Thus,  $U\subseteq \PN(D)$ and dominates $D$. Moreover, $(\Nc_G:\mathbf{t}_{\Nt[U]\setminus D})=\langle D\rangle$ by part (a), and $|\Nt[U]\setminus D|\leq |A|\leq \mathtt{deg}(f)$.
\end{proof}

We now arrive at one of the main results of this section.
\begin{theorem}\label{t:v_number_formula}
Let $\Nc_G$ be the closed neighborhood ideal of a simple connected graph $G$. Then, 
$$
\mathtt{v}(\Nc_G) = \min\{|\Nt[U]\setminus D|\:|\: D \hbox{ is a minimal dominating set and }U\subseteq \PN(D)\hbox{ dominates } D\}.
$$
\end{theorem}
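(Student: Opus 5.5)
The plan is to prove the two inequalities separately, using Proposition~\ref{propo:irred-decom-NG} to identify $\Ass(\Nc_G)$ and Lemma~\ref{lemma:vnumber} for both directions. Write $m$ for the right-hand side of the formula.

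Since $\Nc_G$ is squarefree monomial, it is radical. Hence its associated primes are exactly its minimal primes. By Proposition~\ref{propo:irred-decom-NG}, these are the ideals $\langle D\rangle$ with $D$ a minimal dominating set, the decomposition there being irredundant. Consequently
\[
\vt(\Nc_G)=\min_{D}\vt_{\langle D\rangle}(\Nc_G),
\]
where $D$ runs over the minimal dominating sets of $G$.

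\emph{The inequality $\vt(\Nc_G)\le m$.} First observe that the minimum $m$ is taken over a nonempty set. For any minimal dominating set $D$, Proposition~\ref{mini-irre} says $D$ is irredundant. Thus for each $u\in D$ there is $v_u$ with $\Nt[v_u]\cap D=\{u\}$, so $v_u\in\PN(D)$ (either external or self-private). Then $U=\{v_u : u\in D\}\subseteq \PN(D)$ dominates $D$. Now take any pair $(D,U)$ attaining $m$. Lemma~\ref{lemma:vnumber}(a) gives $(\Nc_G:\mathbf{t}_{\Nt[U]\setminus D})=\langle D\rangle$. The monomial $\mathbf{t}_{\Nt[U]\setminus D}$ is homogeneous of degree $|\Nt[U]\setminus D|$, so $\vt(\Nc_G)\le m$.

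\emph{The inequality $\vt(\Nc_G)\ge m$.} Choose $f\in S_d$ with $d=\vt(\Nc_G)$ and $(\Nc_G:f)=\mathfrak p$ for some $\mathfrak p\in\Ass(\Nc_G)$. By the identification of associated primes, $\mathfrak p=\langle D\rangle$ for a minimal dominating set $D$. Lemma~\ref{lemma:vnumber}(b) then produces $U\subseteq\PN(D)$ dominating $D$ with $|\Nt[U]\setminus D|\le \deg f=d$. Hence $m\le d$.

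There is no real obstacle here. The only point needing care is the identification $\Ass(\Nc_G)=\{\langle D\rangle : D \text{ minimal dominating}\}$, which uses radicality of squarefree monomial ideals together with the irredundancy in Proposition~\ref{propo:irred-decom-NG}.
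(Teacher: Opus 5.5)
Your proof is correct and follows essentially the same route as the paper: you identify $\Ass(\Nc_G)$ with the primes $\langle D\rangle$ for $D$ minimal dominating via Proposition~\ref{propo:irred-decom-NG}, obtain $\vt(\Nc_G)\le m$ from Lemma~\ref{lemma:vnumber}(a), and obtain $\vt(\Nc_G)\ge m$ from Lemma~\ref{lemma:vnumber}(b). Your two additional remarks are details the paper leaves implicit: radicality of squarefree monomial ideals to pass from the irredundant decomposition to $\Ass(\Nc_G)$, and Proposition~\ref{mini-irre} to show the minimum is taken over a nonempty set.
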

\begin{proof}
Proposition~\ref{propo:irred-decom-NG} characterizes the associated primes of the ideal $N_G$ in terms of the minimal dominating sets of $G$. Let $D$ be one of these minimal dominating sets of $G$. For a pair $(D,U)$ such that $U\subseteq \mathtt{P}_{\mathtt{N}}(D)$ and $U$ dominates $D$, Lemma~\ref{lemma:vnumber}~(a) implies that $(\Nc_G:\mathbf{t}_{\Nt[U]\setminus D})=\langle D\rangle$. From which we can conclude that 
    $$
    \mathtt{v}(\Nc_G)\leq\min\{|\Nt[U]\setminus D|\:|\: D \hbox{ is a minimal dominating set and }U\subseteq \PN(D)\hbox{ dominates } D\}.
    $$
    To prove the reverse inequality, assume that $\mathtt{v}(\Nc_G)=\mathtt{deg}(f)$, with $f\in S_d$ and  $(\Nc_G:f)=\langle D\rangle$. Lemma~\ref{lemma:vnumber}~(b) implies that there is a set  $U\subseteq \mathtt{P}_{\mathtt{N}}(D)$ that dominates $D$ and $|\Nt[U]\setminus D|\leq \mathtt{deg}(f)$. Thus, 
    $$
    \mathtt{v}(\Nc_G)\geq\min\{|\Nt[U]\setminus D|\:|\: D \hbox{ is a minimal dominating set,  and }U\subseteq \PN(D)\hbox{ dominates } D\},
    $$
    which completes the proof.
\end{proof}

\begin{remark}\label{r:minimal_c}
For a fixed minimal dominating set $D$, to compute 
$$
\min\{|\Nt[U]\setminus D|\:|\: U\subseteq \PN(D)\hbox{ dominates } D\},
$$
it is enough to consider subsets $U$ of $\PN(D)$ with $|U|=|D|$. Indeed, any $ U\subseteq \PN(D) $ that dominates $D$ must have $|U|\geq |D|$ since $|\Nt[c]\cap D|=1$ for every $c\in U$. In addition, given $U\subset U'\subseteq \PN(D)$, both dominating $D$, we obtain that $|\Nt[U]\setminus D|\leq |\Nt[\mathcal{C'}]\setminus D|$. 
\end{remark}

\begin{example}\label{ex:complete_bipartite_1}
Using Theorem \ref{t:v_number_formula}, we compute the $\vt$-number of the complete $r$-partite graph $\Nc_{K_{n_1,\dots,n_r}}$, with $r\geq 2$ and $n_1\geq \cdots \geq n_r\geq 2$. We denote the set of vertices of this graph by $V_{n_1,\dots,n_r}=\{v^1_1,\dots,v^1_{n_1},\dots,v^r_1,\dots,v^r_{n_r}\}$. All minimal dominating sets are of the form:
\begin{itemize}
\item $D_{i_k,i_t}^{k,t}=\{v_{i_k}^k,v_{i_t}^t\}$, for some $1\leq i_k \leq n_k$, $1\leq i_t\leq n_t$, $1\leq k<t \leq r$, or
\item $D(j)=\{v^j_1,\dots,v^j_{n_j}\}$, for some $1\leq j \leq r$; see Figure \ref{fig:bipartite}.
\end{itemize}
Observe the following.
\begin{itemize}
    \item We have $\PN(D_{i_k,i_t}^{k,t})=D(i_k)\cup D(i_t)\setminus D_{i_k,i_t}^{k,t}$. Consider $U\subset \PN(D_{i_k,i_t}^{k,t})$ that dominates $D_{i_k,i_t}^{k,t}$. The set $U$ must contain at least one element from $D(i_k)\setminus \{v^k_{i_k}\}$ to ensure $v^t_{i_t}\in \Nt[U]$ and at least one element from $D(i_t)\setminus \{v^t_{i_t}\}$ to ensure $v^k_{i_k}\in \Nt[U]$. Thus, $\Nt[U]=V_{n_1,\dots,n_r}$ and $|\Nt[U]\setminus D_{i_k,i_t}^{k,t}|=\sum_{j=1}^r n_j-2$. 
    \item We obtain $\PN(D(j))=D(j)$, for $1\leq j \leq r$. The only subset $U$ of $\PN(D(j))$ that dominates $D(j)$ is $U=D(j)$; for this set, $|\Nt[U]\setminus D(j)| = \sum_{i\neq j} n_i$.
\end{itemize}
As $n_1\geq \cdots \geq n_r\geq 2$, Theorem~\ref{t:v_number_formula} implies that \[\vt(\Nc_{K_{n_1,\dots,n_r}}) = \sum_{i=2}^r n_i.\] 

\begin{figure}[htbp]
    \centering
    \resizebox{0.35\textwidth}{!}{
\begin{tikzpicture}[
    vertex/.style={circle, fill=black, inner sep=2.5pt},
    label_node/.style={fill=none, inner sep=0pt},
    set_ellipse/.style={draw, thick, blue, ellipse, inner xsep=18pt, inner ysep=7pt},
    subset_ellipse/.style={draw, red, dashed, thick}
]

% D(1) Vertices
\node[vertex, label=left:{\Large $v_1^1$}] (u1) at (0, 1.5) {};
\node[vertex, label=left:{\Large $v_2^1$}] (u2) at (0, -1.5) {};

% D(2) Vertices
\node[vertex, label=right:{\Large $v_1^2$}] (v1) at (5, 3) {};
\node[vertex, label=right:{\Large $v_2^2$}] (v2) at (5, 0) {};
\node[vertex, label=right:{\Large $v_3^2$}] (v3) at (5, -3) {};

% Edges
\draw[thick] (u1) -- (v1);
\draw[thick] (u1) -- (v2);
\draw[thick] (u1) -- (v3);
\draw[thick] (u2) -- (v1);
\draw[thick] (u2) -- (v2);
\draw[thick] (u2) -- (v3);

% Ellipses for Sets D(1) and D(2)
\node[set_ellipse, fit=(u1) (u2), label=above:{\Large $D(1)$}] {};
\node[set_ellipse, fit=(v1) (v2) (v3), label=above:{\Large $D(2)$}] {};

% D^{k,t}_{i_k, i_t}
\draw[subset_ellipse, rotate around={-16.7:(2.5,-2.25)}] (2.5,-2.25) ellipse (3.6cm and 1.2cm);
\node[label_node] at (2.5, -4.1) {\Large $D^{1,2}_{2,3}$};
\end{tikzpicture}
}
    \caption{Bipartite graph and the minimal dominating sets $D(1)$, $D(2)$ and $D_{2,3}^{1,2}$.}
    \label{fig:bipartite}
\end{figure}

\end{example}

%%% section 4 Regularity and v-number 

\section{Relation between the $\vt$-number and other invariants}\label{sec:invariants}
This section is dedicated to presenting several relations between the $\vt$-number, domination number, cover number, and matching number, as well as to showing the relation with the Castelnuovo-Mumford regularity for special families of graphs.

\begin{theorem}\label{thm:cover-v}
    Let $\Nc_G$ be the closed neighborhood ideal of a simple connected graph $G$. We have
    $$\gamma(G)\leq\vt(\Nc_G)\leq \tau(G).$$
\end{theorem}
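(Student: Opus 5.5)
The plan is to apply Theorem~\ref{t:v_number_formula} to both inequalities. For the upper bound we exhibit one admissible pair $(D,U)$ with $|\Nt[U]\setminus D|\le\tau(G)$. For the lower bound we turn an arbitrary admissible pair $(D,U)$ into a dominating set of size at most $|\Nt[U]\setminus D|$. Throughout we assume $G$ has at least two vertices. For $G=K_1$ we have $\Nc_G=\langle t_1\rangle$, so $\vt(\Nc_G)=0<1=\gamma(G)$ and the lower bound fails. Since $G$ is connected with $|V(G)|\ge 2$, every vertex has a neighbor.

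\emph{Upper bound.} Let $C$ be a vertex cover with $|C|=\tau(G)$. Then $C$ is a minimal vertex cover, so $I:=V(G)\setminus C$ is a maximal independent set. By Proposition~\ref{p:maximal_independent_is_minimal_dominating}, $I$ is a minimal dominating set. Since $I$ is independent, every $w\in I$ satisfies $\Nt[w]\cap I=\{w\}$, so $I\subseteq\PN(I)$. Hence $U:=I$ is an admissible choice that dominates $I$. Moreover $\Nt[U]\setminus I=\Nt(I)\subseteq C$, again because $I$ is independent. Theorem~\ref{t:v_number_formula} then gives $\vt(\Nc_G)\le|C|=\tau(G)$.

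\emph{Lower bound.} Fix a minimal dominating set $D$ and $U\subseteq\PN(D)$ dominating $D$, and put $A:=\Nt[U]\setminus D$. For each $w\in D$ choose $c_w\in U$ with $\Nt[c_w]\cap D=\{w\}$. Split $D$ into two parts:
\begin{itemize}
\item $W:=\{w\in D\mid c_w=w\}$; these are self-private vertices, so $\Nt(w)\cap D=\emptyset$ and $\Nt(w)\subseteq A$;
\item $D\setminus W$; for these $c_w\notin D$ is an external private neighbor, so $c_w\in A$.
\end{itemize}
The vertices $c_u$ for $u\in D\setminus W$ are pairwise distinct, since each has a unique neighbor in $D$. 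None of them lies in $\Nt(W)$, again because its only neighbor in $D$ is $u\notin W$. Therefore
\[
|A|\ \ge\ |D\setminus W|+|\Nt(W)|\ \ge\ |D^*|,\qquad D^*:=(D\setminus W)\cup\Nt(W).
\]
It remains to check that $D^*$ is dominating, which gives $\gamma(G)\le|D^*|\le|A|$ and, by Theorem~\ref{t:v_number_formula}, $\gamma(G)\le\vt(\Nc_G)$.
\begin{itemize}
\item Vertices in $\Nt[D\setminus W]$ are dominated by $D\setminus W\subseteq D^*$.
\item Each $w\in W$ has a neighbor, which lies in $\Nt(W)\subseteq D^*$.
\item Any other vertex $y$ is dominated by $D$, so it is adjacent to some $w\in W$. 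Then $y\in\Nt(W)\subseteq D^*$.
\end{itemize}

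The main obstacle is this lower bound. The naive estimate $|A|\ge|D|$ is false: for a star whose leaves form $D$, we get $A=\{\text{center}\}$. The fix is the replacement of the self-private part $W$ by its neighborhood $\Nt(W)$ described above, and the delicate points are the disjointness of the two counted parts of $A$ and the domination of the vertices whose only neighbor in $D$ lies in $W$.
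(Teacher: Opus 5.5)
Your proof is correct and follows the paper's route. The upper bound uses the maximal independent set complementary to a minimum vertex cover, with $U=D$. The lower bound replaces the self-private part of $D$ by its open neighborhood and counts pairwise-distinct private neighbors inside $\Nt[U]\setminus D$. The only difference is the split: the paper separates vertices of $D$ by whether they have any external private neighbor, while you separate them by whether their chosen witness in $U$ is themselves, which makes the disjointness count slightly cleaner. Your $K_1$ caveat is also right, since the paper's proof tacitly assumes $|V(G)|\geq 2$ when it uses connectivity to get $|\Nt(b_j)|\geq 1$.
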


\begin{proof}	For the first inequality, let  $(D,U)$ be a pair with $D$ minimal domination set, $U\subset\PN(D)$ that dominates $D$, and $\vt(\Nc_G)=|\Nt[U]\setminus D|$. We write  $D$ as $
D=\{a_1,\dots,a_r,b_1,\dots,b_s\}$, where $\Nt(a_i)\cap \PN(D)\neq \emptyset$, for $1\leq i \leq r$, and $\Nt[b_i]\cap \PN(D)=\{b_i\}$, for $1\leq j \leq s$. 
Then, the set 
	$$
	D':=\bigcup_{j=1}^s\Nt(b_j)\cup\{a_i\:|\:a_i\in D\}
	$$
	 is a dominating set. Indeed, if $v\in V(G)$, then $v$ is adjacent to $a_i$ for some $1\leq i \leq r$ or 
     $v\in\Nt[b_j]$ for some $b_j$.   Notice that $\Nt[U]\setminus D$ contains a set of cardinality $|D'|$. Indeed, since the graph is connected, we have $|\Nt(b_j)|\geq 1$ for $1\leq j \leq s$. Since $U\subset \PN(D)$ dominates $D$ and $\Nt[b_j]\cap \PN(D)=\set{b_j}$ for $1\leq j \leq s$, we have $\set{b_1,\dots,b_s}\subset C$. Therefore,
     \[
     \bigcup_{j=1}^s\Nt(b_j)\subset \Nt[U]\setminus D.\]
     Also note that $\Nt[U]\setminus D$ contains an external private neighbor for every element  $a_i\in D$. Thus, 
    $$
    |\Nt[U]\setminus D|\geq|D'|\geq \gamma(G).
    $$
For the second inequality, let $U$ be a minimal vertex cover with $|U|=\tau(G)$. The complement of $D:=V(G)\setminus U$ is a maximal independent set, and therefore a minimal dominating set by Proposition \ref{p:maximal_independent_is_minimal_dominating}. Given that $D$ is independent, we have $D\subseteq \PN(D)$ and $D$ dominates $D$. Thus, 
$$
\tau(G)=|U|=|V(G)\setminus D|=|\Nt[D]\setminus D|\geq \vt(\Nc_G),
$$
which completes the proof.
\end{proof}

A particularly interesting case is when $\gamma(G)=\tau(G)$, since this directly determines $\vt(\Nc_G)$. These types of graphs have been studied in the literature \cite{randerath_domination_equal_covering, volkzmann_domination_equal_covering}. In particular, the minimum degree of $G$ must be either 1 or 2, and if it is 2, then $G$ must be bipartite \cite{randerath_domination_equal_covering}. We show an example with the complete bipartite graph in Example \ref{ex:regularity}. 

\begin{corollary}\label{p:bound_matching}
Let $\Nc_G$ be the closed neighborhood ideal of a simple connected graph $G$. Then,
\[\mathtt{v}(\Nc_G)\leq 2 a(G).\]
\end{corollary}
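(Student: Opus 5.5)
The plan is to combine Theorem~\ref{thm:cover-v} with the classical comparison between the vertex cover number and the matching number. Since Theorem~\ref{thm:cover-v} already gives $\vt(\Nc_G)\leq \tau(G)$, it suffices to prove $\tau(G)\leq 2a(G)$ for every simple graph $G$.

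First I would fix a maximum matching $M\subseteq E(G)$ with $|M|=a(G)$. Then I would let $U$ be the set of all endpoints of edges of $M$, so $|U|=2|M|=2a(G)$.

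Next I would check that $U$ is a vertex cover. Suppose an edge $\{u,v\}\in E(G)$ had neither endpoint in $U$. Then $\{u,v\}$ shares no vertex with any edge of $M$, so $M\cup\{\{u,v\}\}$ is a matching strictly larger than $M$, contradicting maximality. Hence every edge meets $U$, and
\[
\tau(G)\leq |U| = 2a(G).
\]
Chaining this with Theorem~\ref{thm:cover-v} gives $\vt(\Nc_G)\leq\tau(G)\leq 2a(G)$.

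There is no real obstacle here; the only step needing an argument is the maximality check, which is immediate. Note that the argument uses only maximality of $M$ with respect to inclusion, not maximum cardinality, so any maximal matching would do as well.
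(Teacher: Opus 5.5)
Your proof is correct and is the same as the paper's: the endpoints of a maximum matching form a vertex cover of size $2a(G)$ (maximality with respect to inclusion suffices, as you note), and Theorem~\ref{thm:cover-v} gives $\vt(\Nc_G)\leq\tau(G)$. Your chain $\vt(\Nc_G)\leq\tau(G)\leq 2a(G)$ is in fact the correct form of the paper's displayed inequality, which mistakenly writes $\gamma(G)$ where $\tau(G)$ is meant.
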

\begin{proof}
Let $M$ be a matching in $G$ with maximal size, this is $a(G)=|M|$. Notice that the set of vertices in $M$ forms a vertex cover, otherwise we can obtain a matching of size $|M|+1$. Therefore, by Theorem~\ref{thm:cover-v},
$$
\vt(\Nc_G)\leq \gamma(G)\leq2|M|=2a(G),
$$
as required.
\end{proof}

Besides the previous bound, $\mathtt{v}(\Nc_G)$ can be lower than, greater than, or equal to $a(G)$, as the next examples show. Moreover, we can get the equality $\mathtt{v}(\Nc_G)=2a(G)$ for some graphs.

\begin{example}\label{ex:matching_number}
Consider the graph $P_6$ given by a path with 6 vertices $\{v_1,\dots,v_6\}$. Then, we have that $a(P_6)=3$ and $D=\{v_2,v_5\}$ is a minimal dominating set. If we take $U=\{v_1,v_6\}$, by Theorem \ref{t:v_number_formula}, we have that $|\Nt[U]\setminus D|=2$. In other words, $\vt(\Nc_{P_6})\leq 2<a(P_6)$. One can also check that, in fact, $\vt(\Nc_{P_6})=2$.  

Now consider the complete graph $K_n$, for $n\geq 2$. Its matching number is $a(K_n)=\left\lfloor \frac{n}{2}\right\rfloor$. Let $V(K_n)=\{v_1,\dots,v_n\}$ be the vertices of $K_n$. Then any $D_i=\{v_i\}$, $1\leq i \leq n$, is a minimal dominating set. The set of private neighbors of $D_i$ is $V(K_n)$. Thus, by Theorem \ref{t:v_number_formula}, $\vt(\Nc_{K_n})=n-1$. If $n\geq 3$, we obtain $\vt(\Nc_{K_n})>a(K_n)$. In fact, we obtain
$$
\vt(\Nc_{K_n})=\begin{cases}
	2a(K_n)\hbox{, if }n\hbox{ is odd,}\\
	2a(K_n)-1\hbox{, if }n\hbox{ is even.}
	\end{cases}
$$
\end{example}

Next, we focus on the relation between the $\vt$-number and the Castelnuovo-Mumford regularity. 

\begin{example}\label{ex:regularity}
We consider the complete bipartite graph $K_{n_1,n_2}$ with $n_1\geq n_2$. From Example~\ref{ex:complete_bipartite_1}, we obtain that $\vt(\Nc_{K_{n_1,n_2}})=n_2$ (note that this can also be obtained directly from Theorem \ref{thm:cover-v}). From \cite[Thm. 2.10]{def-NG}, we have $\regt(S/\Nc_{K_{n_1,n_2}})=n_1+n_2-2$. Therefore, 
$$\regt(S/\Nc_{K_{n_1,n_2}})-\vt(\Nc_{K_{n_1,n_2}})=n_1-2.
$$
Moreover, $a(K_{n_1,n_2})=n_2$, and the difference between the regularity and the matching number can also be made arbitrarily large. 
\end{example}

For edge ideals, it has been shown that there exist families of graphs for which the $\vt$-number is smaller than the Castelnuovo–Mumford regularity. On the other hand, there are also families of graphs for which the $\vt$-number exceeds the Castelnuovo–Mumford regularity, with an arbitrarily large gap between the two invariants~\cite{civan_v_number}. In the case of closed neighborhood ideals, Example~\ref{ex:regularity} suggests that the Castelnuovo–Mumford regularity may be greater than the $\vt$-number for a broader class of graphs. As we show below, this is indeed the case. Moreover, we conjecture that this phenomenon holds in general.

\begin{theorem}[\cite{castelnuovomatch}]
For any graph $G$,
\[
\regt(S/\Nc_G) \geq a(G),
\]
 Moreover, the equality holds when $G$ is a tree.
\end{theorem}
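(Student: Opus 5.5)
The plan is to prove the lower bound with Hochster's formula and the equality for trees by induction on the number of vertices. Throughout, $\Nc_G$ is a squarefree monomial ideal. I would use two standard facts about such ideals. First, for a variable $x$ and $m\ge 1$ we have $\regt(S/(I:x^m))\le\regt(S/I)$; since $I$ is squarefree, $(I:x^m)=(I:x)$, so this amounts to setting $x=1$. Second, Dao–Huneke–Schweig: $\regt(S/I)\in\{\regt(S/(I:x))+1,\ \regt(S/(I,x))\}$, and
\[
\regt(S/I)\le\max\{\regt(S/(I:x))+1,\ \regt(S/(I,x))\}.
\]

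\textbf{Lower bound.} Fix a maximum matching $M=\{\{u_1,w_1\},\dots,\{u_k,w_k\}\}$ with $k=a(G)$, and put $W=V(M)$. By the first fact I may set every vertex outside $W$ equal to $1$ without increasing regularity. This produces the ideal $J\subseteq\mathbb{K}[W]$ generated by the monomials $\mathbf{t}_{\Nt[v]\cap W}$ for $v\in V(G)$. Maximality of $M$ makes $W$ a vertex cover, so $V(G)\setminus W$ is independent. Hence each $v\notin W$ contributes the monomial $\mathbf{t}_{\Nt(v)}$ with $\Nt(v)\subseteq W$.

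I would then exhibit a nonzero Betti number $\beta_{k,\mathbf{t}_W}(J)$, i.e.\ in homological degree $k$ and multidegree $\mathbf{t}_W$, which gives $\regt(S/J)\ge 2k-k=k$. By Hochster's formula this is equivalent to showing that the simplicial complex of subsets $\sigma\subseteq W$ with $\mathbf{t}_{\sigma}\notin J$ has nonvanishing reduced homology in degree $k-2$ when restricted to $W$. The model case is that each closed neighborhood $\Nt[u_i]\cap W$ contains the edge $\{u_i,w_i\}$, so $J$ behaves like a complete intersection of $k$ products, one for each matched pair. I expect the relevant complex to be homotopy equivalent to the join of $k$ copies of $S^0$, that is, to $S^{k-1}$-type spheres; the precise degree of the nonvanishing homology is what must come out as $k-2$ for the count $2k-k=k$ to work. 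I would prove this by induction on $k$, deleting one matched pair at a time via a link/deletion (Mayer–Vietoris) argument.

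\textbf{Main obstacle.} The step I expect to be hardest is that $J$ is \emph{not} literally a complete intersection. Neighborhoods of different matched vertices overlap, and unmatched vertices contribute extra generators $\mathbf{t}_{\Nt(v)}$. To handle this I would first choose $M$ cleverly, for instance a maximum matching minimizing the number of unmatched vertices adjacent to both ends of a matched edge, so that Gallai–Edmonds-type alternating path arguments control these overlaps. If that fails, I would replace the Hochster computation by an induction on $k$ using the Dao–Huneke–Schweig dichotomy. In that version I colon by a suitable vertex $x$ of an edge of $M$, chosen so that $(\Nc_G:x)$ still contains, after setting variables to $1$, the neighborhood-type ideal of a graph with a matching of size $k-1$ plus one extra degree.

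\textbf{Equality for trees.} Let $T$ be a tree; the lower bound already gives $\regt(S/\Nc_T)\ge a(T)$, so it remains to prove $\regt(S/\Nc_T)\le a(T)$, by induction on $|V(T)|$. Take a longest path and let $\ell$ be the leaf at its end, with support vertex $s$. Apply the second fact with $x=s$. The ideal $(\Nc_T,s)$ is, modulo $s$, generated by the neighborhood monomials of vertices not adjacent to $s$; this is a neighborhood-type ideal of the forest $T-\Nt[s]$ (possibly with some generators dropped), which has matching number at most $a(T)$. For the colon, $(\Nc_T:s)$ contains $\ell$, because $\Nt[\ell]=\{\ell,s\}$. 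Killing $\ell$ then leaves an ideal on $T-\{\ell,s\}$, whose matching number is at most $a(T)-1$. Both pieces are ideals of forests of the same shape, so the induction hypothesis (applied to each component, using additivity of regularity) bounds them by $a(T)$ and $a(T)-1$ respectively. The Dao–Huneke–Schweig upper bound then yields $\regt(S/\Nc_T)\le a(T)$. The care needed here is to formulate the inductive hypothesis for the slightly more general class of ideals that appear after colons and deletions, namely neighborhood ideals of forests with some generators removed.
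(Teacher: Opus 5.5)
The paper does not prove this statement; it quotes it from \cite{castelnuovomatch}. Your argument therefore has to stand on its own, and its lower-bound half breaks at the first step. Setting the unmatched vertices equal to $1$ can destroy the bound. Take $P_3$ with edges $t_1t_2$, $t_2t_3$ and $M=\{\{t_1,t_2\}\}$. Then $J=(\Nc_{P_3}:t_3)=\langle t_1t_2,t_2\rangle=\langle t_2\rangle$, so $\regt(\mathbb{K}[W]/J)=0<1=a(P_3)$. In general, an unmatched leaf attached to a matched vertex $u$ puts the variable $u$ into $J$, and for stars no choice of maximum matching avoids this. Since you only know $\regt(S/J)\le\regt(S/\Nc_G)$, the statement you set out to prove about $J$ is simply false.

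Even when $W=V(G)$, the target nonvanishing in multidegree $\mathbf{t}_W$ and homological degree $k$ fails. For $K_4$ one has $k=2$ and $J=\Nc_{K_4}=\langle t_1t_2t_3t_4\rangle$, whose only nonzero Betti numbers are $\beta_0$ and $\beta_{1,\mathbf{t}_W}$. The complex is the boundary of a tetrahedron, a $2$-sphere rather than a join of two copies of $S^0$. The bound $\regt(S/\Nc_{K_4})=3\geq 2$ comes from a large closed neighborhood, not from the matching. Your indexing is also off by one: Hochster gives $\beta_{k,\mathbf{t}_W}(S/J)=\dim\tilde H_{k-1}(\Delta_W)$, which is the degree your $S^{k-1}$ model predicts, whereas homology in degree $k-2$ would only give $\regt\geq k-1$.

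The fallback does not supply the missing $+1$ either. Regularity is not monotone under inclusion of ideals. Moreover, the dichotomy $\regt(S/I)\in\{\regt(S/(I:x))+1,\regt(S/(I,x))\}$ yields no lower bound beyond $\max\{\regt(S/(I:x)),\regt(S/(I,x))\}$, because $\min\{r_1+1,r_2\}\le\max\{r_1,r_2\}$. Gaining a unit needs something like the Mayer--Vietoris surjection $\tilde H_j(\Delta_\sigma)\to\tilde H_{j-1}(\operatorname{lk}_{\Delta_\sigma}(x))$, valid when $\Delta_{\sigma\setminus\{x\}}$ is acyclic. It would be applied to the full Stanley--Reisner complex of $\Nc_G$ (whose faces are complements of dominating sets), letting the homology sit in any degree $\geq k-1$. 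Nothing in your proposal plays this role.

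For trees, your colon branch is correct and even exact: $(\Nc_T:s)=\langle \ell\rangle+\Nc_{T-\{\ell,s\}}$ with $a(T-\{\ell,s\})\le a(T)-1$. The deletion branch, however, is not an ideal of $T-\Nt[s]$. Let $v_2$ be the non-leaf neighbor of $s$ on the longest path, and let $T''$ be $T$ with $s$ and its leaf neighbors removed. Then $(\Nc_T,s)=\langle s\rangle+\langle \mathbf{t}_{\Nt[v]} : v\in V(T'')\setminus\{v_2\}\rangle$, which still involves $v_2\in\Nt[s]$. For instance, for $P_4$ with edges $t_1t_2,t_2t_3,t_3t_4$ and $s=t_2$ it is $\langle t_2,t_3t_4\rangle$. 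So it is the ideal of $T''$ with one generator omitted.

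More seriously, the enlarged class on which you want to run the induction does not satisfy the bound. That class is forest neighborhood ideals with some generators removed, bounded by the matching number. Keeping only the generator of the middle vertex of $P_3$ gives $\langle t_1t_2t_3\rangle$, of regularity $2>1$. Keeping only the center's generator of $K_{1,m}$ gives regularity $m$. So the induction does not close. You would need a correctly formulated, separately proved statement for the ideals that actually occur: here $\Nc_{T''}$ with the single generator of $v_2$ omitted must be shown to have regularity at most $a(T)$. Within this paper, the tree equality follows more directly from Theorem~\ref{thm:chordal} and K\"onig's theorem, since trees are chordal and bipartite; this is how Theorem~\ref{thm:vreg} uses it.
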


\begin{theorem}[\cite{homologicalinvar}]\label{thm:homo-inv}
If $G$ is a bipartite graph or a very well-covered graph, then
\[
\regt(S/\Nc_G) \geq \tau(G).
\]
\end{theorem}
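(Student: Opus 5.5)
The plan is to reduce both cases to the inequality $\regt(S/\Nc_G)\geq a(G)$ from the preceding theorem of \cite{castelnuovomatch}. It then suffices to show that $a(G)=\tau(G)$ whenever $G$ is bipartite or very well-covered. One direction is free for every graph: the edges of a matching are pairwise disjoint, and each must contain a vertex of any vertex cover, so $a(G)\leq \tau(G)$. The content is therefore the reverse inequality $\tau(G)\leq a(G)$ in each case.

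For bipartite $G$, I would invoke K\H{o}nig's theorem, which states exactly that $\tau(G)=a(G)$. Chaining with the preceding theorem gives
\[
\regt(S/\Nc_G)\geq a(G)=\tau(G).
\]
This case needs nothing beyond quoting K\H{o}nig.

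For very well-covered $G$ on $n$ vertices, every minimal vertex cover has size $n/2$, so $\tau(G)=n/2$. Since trivially $a(G)\leq n/2$, it is enough to show that $G$ has a perfect matching. The first thing I would try is Favaron's structural characterization: a graph without isolated vertices is very well-covered if and only if it has a perfect matching $M$ satisfying an extra local condition on the edges of $M$. Only the existence of the perfect matching is needed here. With it, $a(G)=n/2=\tau(G)$, and the preceding theorem gives $\regt(S/\Nc_G)\geq \tau(G)$.

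I expect the main obstacle to be the perfect-matching step if one wants to avoid citing Favaron. The fallback is Hall's condition applied to a maximal independent set $I$, with $|I|=n/2$, against its complement $V(G)\setminus I$, using the following lemma. Suppose some $A\subseteq I$ had $|\Nt(A)|<|A|$. Then $(I\setminus A)\cup \Nt(A)$ is still a vertex cover, of size smaller than $n/2$. Indeed, vertices of $I$ are pairwise non-adjacent, so every edge at a vertex of $A$ ends in $\Nt(A)$; edges inside $V(G)\setminus I$ are still covered by $I\setminus A$... this needs care. The cleaner choice is to take the complement $C=V(G)\setminus I$ as the cover and, if $|\Nt(A)|<|A|$, enlarge the independent set $I$ to $(C\setminus\Nt(A))\cup A$. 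Extending this to a maximal independent set then contradicts well-coveredness, since its size exceeds $n/2$. Hall's theorem then matches $I$ into $C$, which gives a perfect matching because $|I|=|C|$.
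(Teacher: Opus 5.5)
Your main argument is correct, and it takes a different route from the paper. The paper gives no argument for Theorem~\ref{thm:homo-inv}; it simply imports the statement from \cite{homologicalinvar}. You instead derive it from the general bound $\regt(S/\Nc_G)\geq a(G)$ of \cite{castelnuovomatch}, stated just before it, by showing $a(G)=\tau(G)$ in both classes:
\begin{itemize}
\item for bipartite graphs, by K\"onig's theorem;
\item for very well-covered graphs, from $\tau(G)=n/2$ together with the perfect matching supplied by Favaron's characterization.
\end{itemize}
This is the same kind of step the paper itself uses for trees in Theorem~\ref{thm:vreg}, where K\"onig identifies $\tau(G)$ with $a(G)$. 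What your route buys is the observation that, within this paper, Theorem~\ref{thm:homo-inv} is not an independent input. It follows from the matching bound plus a purely graph-theoretic fact. The price is that, for very well-covered graphs, all the combinatorial content shifts onto Favaron's theorem, whereas the citation route treats the whole statement as a black box.

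Your self-contained fallback for the perfect matching does not work, so keep the citation. The step that fails is the claim that $(C\setminus\Nt(A))\cup A$ is independent.
\begin{itemize}
\item $C=V(G)\setminus I$ is only a vertex cover, so it may contain edges.
\item Nothing available at that point ($I$ maximal independent, $|\Nt(A)|<|A|$) rules out an edge of $G[C]$ avoiding $\Nt(A)$. For example, take $I=\{x,y,z\}$, $C=\{c,d,e\}$, edges $xc,yc,zd,ze,de$, and $A=\{x,y\}$. Then $(C\setminus\Nt(A))\cup A=\{d,e,x,y\}$ contains the edge $de$.
\item This swap is precisely K\"onig's argument, which needs both sides of the bipartition to be independent. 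It has the same defect you already noticed in $(I\setminus A)\cup\Nt(A)$, which fails to cover edges inside $C$.
\end{itemize}
Well-coveredness enters only after the faulty step, so it cannot repair it. Producing the perfect matching genuinely requires using well-coveredness to control the edges inside $C$, and that is the nontrivial content of Favaron's theorem.
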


\begin{theorem}[\cite{homologicalinvar}]\label{thm:chordal}
If $G$ is a chordal graph, then
\[
\regt(S/\Nc_G) = \tau(G).
\]
\end{theorem}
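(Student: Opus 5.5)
The plan is to prove the two inequalities $\regt(S/\Nc_G)\geq \tau(G)$ and $\regt(S/\Nc_G)\leq \tau(G)$ separately, in both cases by induction on $|V(G)|$. The induction uses a \emph{simplicial vertex}, meaning a vertex $x$ whose closed neighborhood $\Nt[x]$ is a clique. Every chordal graph has one, by the existence of a perfect elimination ordering. Induced subgraphs of chordal graphs are chordal, and by the additivity remark it suffices to treat connected graphs.

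\textbf{Upper bound.} Fix a simplicial vertex $x$ and a neighbor $y$ of $x$. Apply the standard inequality coming from the short exact sequence
\[
0\to S/(\Nc_G:y)(-1)\to S/\Nc_G\to S/(\Nc_G+\langle y\rangle)\to 0,
\]
namely
\[
\regt(S/\Nc_G)\leq\max\{\regt(S/(\Nc_G:y))+1,\ \regt(S/(\Nc_G+\langle y\rangle))\}.
\]
I would iterate this over all vertices of $\Nt(x)$ if needed. The task is then to identify each colon and sum with a closed neighborhood ideal, possibly plus variables, of a smaller chordal graph. Setting $y=0$ kills every generator whose neighborhood contains $y$, including $\mathbf{t}_{\Nt[x]}$. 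Colon by $y$ replaces each such generator by $\mathbf{t}_{\Nt[v]\setminus\{y\}}$. Since $\Nt[x]$ is a clique, these modified generators are controlled by $\Nt[x]\setminus\{y\}$. I expect each piece to reduce to $\Nc_{G'}$ plus a few extra variables, with $G'$ obtained by deleting $y$ or $\Nt[x]$. The bound then follows from $\tau(G')+1\leq \tau(G)$ or $\tau(G')\leq\tau(G)$, using that any vertex cover of $G$ must contain $|\Nt[x]|-1$ vertices of the clique $\Nt[x]$.

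\textbf{Lower bound.} Take a minimum vertex cover $C$, so that $D=V(G)\setminus C$ is a maximal independent set and hence a minimal dominating set by Proposition~\ref{p:maximal_independent_is_minimal_dominating}. I would exhibit a nonvanishing Betti number $b_{i,i+\tau(G)}(S/\Nc_G)$ via Hochster's formula, or equivalently via the Alexander dual. The Alexander dual is generated by the monomials $\mathbf{t}_{D'}$ for minimal dominating sets $D'$, by Proposition~\ref{propo:irred-decom-NG}. Alternatively, restricting to the variables of a suitable induced subgraph (restriction does not increase regularity) reduces the problem to a disjoint union of pieces where additivity applies. For chordal graphs I would try a decomposition of $G$ into cliques attached at simplicial vertices. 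On each such clique $K_m$, the computation in Example~\ref{ex:matching_number} style gives regularity $m-1=\tau(K_m)$.

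The main obstacle is the upper-bound induction. One must check carefully that $(\Nc_G:y)$ and $\Nc_G+\langle y\rangle$ really are, up to extra variables and redundant generators, closed neighborhood ideals of chordal graphs with the right cover numbers. The difficulty is that removing $y$ changes neighborhoods of vertices outside $\Nt[x]$. If a single-vertex step fails, I would instead take the colon by the whole monomial $\mathbf{t}_{\Nt[x]\setminus\{x\}}$ and use a Betti splitting along the generator $\mathbf{t}_{\Nt[x]}$.
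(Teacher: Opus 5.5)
The paper gives no proof of its own here; it quotes the result from the cited reference. Your sketch therefore has to stand alone, and it does not close in either direction.

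\textbf{Upper bound.} Your colon step works, and more cleanly than you state. Since $\Nt[x]\subseteq\Nt[y]$, the only generator of $(\Nc_G:y)=\langle \mathbf{t}_{\Nt[v]\setminus\{y\}}\mid v\in V(G)\rangle$ that is not a generator of $\Nc_{G-y}$ is $\mathbf{t}_{\Nt(y)}$, and it is a multiple of $\mathbf{t}_{\Nt[x]\setminus\{y\}}$. Hence $(\Nc_G:y)=\Nc_{G-y}$. Also $\tau(G-y)=\tau(G)-1$, because swapping $y$ for $x$ turns any maximum independent set into one avoiding $y$.

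The gap is the other piece. $\Nc_G+\langle y\rangle=\langle y\rangle+\langle \mathbf{t}_{\Nt[v]}\mid v\notin\Nt[y]\rangle$ is not $\Nc_{G'}$ plus variables for $G'=G-y$ or $G'=G-\Nt[x]$. The vertices of $\Nt(y)$ lose their own generators but still occur in the generators of their neighbors. For $P_6$ with $x=v_1$, $y=v_2$, one gets $\langle t_2,t_3t_4t_5,t_5t_6\rangle$. By contrast, $\Nc_{G-y}=\langle t_1,t_3t_4,t_5t_6\rangle$ and $\Nc_{G-\Nt[x]}=\langle t_3t_4,t_5t_6\rangle$.

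You also cannot simply enlarge the induction to partial ideals $\langle\mathbf{t}_{\Nt[v]}\mid v\in W\rangle$ with the same bound $\tau$. For $P_3$ and $W=\{v_2\}$ this ideal is $\langle t_1t_2t_3\rangle$, whose regularity is $2>\tau(P_3)$. Your Betti-splitting fallback has the same defect: it leaves you needing the regularity of the partial ideal $\langle\mathbf{t}_{\Nt[v]}\mid v\notin\Nt[x]\rangle$.

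The missing observation is $\Nc_G+\langle y\rangle=\Nc_{G-x}+\langle y\rangle$. A generator avoiding $y$ comes from some $v\notin\Nt[y]\supseteq\Nt[x]$, so deleting $x$ does not change it. Combine this with the restriction lemma $\regt(S/(J+\langle y\rangle))\le\regt(S/J)$. Your exact sequence then gives $\regt(S/\Nc_G)\le\max\{\tau(G-y)+1,\tau(G-x)\}=\tau(G)$, by induction over all (possibly disconnected) chordal graphs.

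\textbf{Lower bound.} The concrete plan, restricting to a disjoint union of simplicial cliques and using additivity, cannot work. It yields at most $\sum\deg(x_i)$ over simplicial vertices with pairwise disjoint closed neighborhoods. For $P_6$, where only $v_1,v_6$ are simplicial, this is $2<3=\tau(P_6)=\regt(S/\Nc_{P_6})$. Worse, a Hochster computation shows that every restriction of $\Nc_{P_6}$ to a proper subset of the variables has regularity at most $2$. The value $3$ is carried only by $\beta_{3,6}$, in the full multidegree, so no restriction argument can certify it.

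Your other option, exhibiting a Betti number via Hochster's formula, must actually be carried out. It can be done with the same pair $(x,y)$. Let $\Delta$ be the Stanley--Reisner complex of $\Nc_G$.
\begin{itemize}
\item Every closed neighborhood containing $x$ is $\Nt[v]$ with $v\in\Nt[x]$, so it contains $\Nt[x]\ni y$. Hence $x$ is a cone point of $\Delta_{V\setminus\{y\}}$.
\item $\mathrm{lk}_\Delta(y)$ is the Stanley--Reisner complex of $(\Nc_G:y)=\Nc_{G-y}$.
\item Mayer--Vietoris for $\Delta=\Delta_{V\setminus\{y\}}\cup\mathrm{st}_\Delta(y)$, whose two pieces are contractible, gives $\tilde H_j(\Delta)\cong\tilde H_{j-1}(\mathrm{lk}_\Delta(y))$.
\end{itemize}
Iterate down to an edgeless graph, where $\Delta=\{\emptyset\}$ and $\tau=0$. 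This yields $\tilde H_{\tau(G)-1}(\Delta)\cong\mathbb{K}$. Therefore $\beta_{|V|-\tau(G),V}(S/\Nc_G)\neq 0$, and $\regt(S/\Nc_G)\ge\tau(G)$.
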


\begin{theorem}\label{thm:vreg}
If the graph $G$ is bipartite, very well-covered, or chordal, then
$$
\vt(\Nc_G)\leq \regt(S/\Nc_G).
$$
Moreover, if $G$ is a tree, $\vt(\Nc_G)\leq\tau(G)=a(G)=\regt(S/\Nc_G).$
\end{theorem}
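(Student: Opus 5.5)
The argument is a direct chain of inequalities. The upper bound $\vt(\Nc_G)\leq\tau(G)$ from Theorem~\ref{thm:cover-v} is combined with the lower bounds on the regularity quoted just before the statement. Theorem~\ref{thm:cover-v} holds for connected graphs. By the additivity remark for monomial ideals in disjoint sets of variables, both $\vt$ and $\regt$ add over connected components. The cover number $\tau$ also adds over components, and bipartiteness, chordality and very well-coveredness pass to components. So we may reduce to the connected case, or simply apply the inequality componentwise and sum.

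\textbf{First part.} Let $G$ be connected and bipartite or very well-covered. Theorem~\ref{thm:cover-v} gives $\vt(\Nc_G)\leq\tau(G)$, and Theorem~\ref{thm:homo-inv} gives $\tau(G)\leq\regt(S/\Nc_G)$, which proves the claim. If $G$ is chordal, we use Theorem~\ref{thm:chordal} instead, which gives $\regt(S/\Nc_G)=\tau(G)\geq\vt(\Nc_G)$.

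\textbf{Tree case.} A tree is bipartite, so by K\H{o}nig's theorem $\tau(G)=a(G)$. A tree is also chordal, so Theorem~\ref{thm:chordal} gives $\regt(S/\Nc_G)=\tau(G)$. The quoted result from \cite{castelnuovomatch} gives $\regt(S/\Nc_G)=a(G)$ as well, which serves as a consistency check. Combined with $\vt(\Nc_G)\leq\tau(G)$ from Theorem~\ref{thm:cover-v}, this yields the stated chain.

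The only mild obstacle is bookkeeping for disconnected graphs, including isolated vertices. An isolated vertex $v$ contributes the generator $t_v$ and adds $0$ to both $\vt$ and $\regt$. It also adds $0$ to $\tau$, so the componentwise argument goes through unchanged.
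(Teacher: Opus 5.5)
Your proposal is correct and follows the paper's own proof. The paper also argues via the chain $\vt(\Nc_G)\le\tau(G)\le\regt(S/\Nc_G)$ from Theorems~\ref{thm:cover-v}, \ref{thm:homo-inv} and \ref{thm:chordal}, with K\"onig's theorem giving $\tau(G)=a(G)$ for trees. Your extra bookkeeping for disconnected graphs is care the paper omits. Its cleanest form is to sum $\vt\le\tau$ over the components and then apply Theorems~\ref{thm:homo-inv} or \ref{thm:chordal} to $G$ itself, which avoids having to check that very well-coveredness passes to components.
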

\begin{proof}
This result is a consequence of  Theorem~\ref{thm:cover-v}, Theorem~\ref{thm:homo-inv}, Theorem~\ref{thm:chordal} and K\"onig's theorem~\cite[Thm. 2.1.1]{diestel_graph_theory_book}. 
\end{proof}

\begin{conjecture}\label{conj:vnumber-reg}
For any simple connected graph $G$, we have
$$
\vt(\Nc_G)\leq \regt(S/\Nc_G).
$$
\end{conjecture}

\section{Relation between $\vt$-number and perfect codes}\label{sec:perfect-codes}

This section is devoted to presenting a relation between the $\vt$-number of closed neighborhood ideals and the parameters of perfect error-correcting codes. We first present the definition of an efficient domination set.

\begin{definition}[\cite{prmaximizing}]
A dominating set $D$ of graph $G$ is called an \emph{efficient dominating set} if
\[
|\Nt[v] \cap D| = 1 \quad \text{for every } v \in V(G).
\]
\end{definition}

Efficient dominating sets provide a connection between graph theory and coding theory, as the problem of determining the existence of an efficient dominating set in a graph generalizes the problem of determining the existence of a perfect code in a finite vector space, as we explain next~\cite{perfectin}.

The space $\mathbb{F}_q^m$ with the Hamming distance can be interpreted as a graph $\Gamma(m,q)$: the set of vertices corresponds to the points in $\mathbb{F}_q^m$ and there is an edge between two vertices if the Hamming distance between the corresponding points is one. In other words,  $V(\Gamma(m,q))=\mathbb{F}_q^m$ and 
$$
E(\Gamma(m,q))=\{\{u,v\}\:|\: \delta(u,v)=1\:\}.
$$

This family of graphs is called Hamming graphs and has been extensively studied from a spectral graph-theoretic perspective~\cite{hammingraphs}. When $q=2$, the Hamming graph $\Gamma(m,2)$ corresponds to the well-known hypercube graph~\cite{hypercube}.

Notice that this graph construction for codes allows us to establish the classical problem of the existence of perfect codes as a question in terms of the graph $\Gamma(m,q)$. A perfect code $C$ that corrects $t$ errors has the property that the Hamming balls $S_t(c)$ with radius $t$ and center in $c\in C$ form a partition of the space $\mathbb{F}_q^m$.  If we denote by $\Nt_t(c)$ as the set of vertices with graph distance $t$ from the vertex $c$, then a perfect code $C$ is a subset of vertices of $\Gamma(m,q)$ such that the sets $\Nt_t(c)\cup\{c\}$ form a partition of the Hamming graph. If $t=1$, then the set 
$$
\{\Nt[c]\:|\: c\in C\}
$$
forms a partition of $\Gamma(m,q)$, meaning that $C$ is an efficient domination set. In this way, an efficient dominating set corresponds to a perfect code that corrects $ 1$-errors. The notions of perfect codes, efficient dominating sets, and private neighbors in the graphs have been an active line of research considered by several 
authors; see~\cite{perfectin, privatecube, prmaximizing}.

We focus on $q$-ary Hamming codes, which are perfect codes that correct $1$ error. These codes are among the most important and classical error-correcting codes. Introduced by Richard Hamming, they were the first family of codes to provide a systematic and efficient method for detecting and correcting transmission errors. Their significance lies in their practical applications in digital communication and data storage, and they serve as a central example of perfect codes, illustrating the deep interplay among algebra, combinatorics, and information theory~\cite{hammingcodes}.

Define $n := \frac{q^r-1}{q-1}$, where $q$ is a prime power and $r \geq 2$. The $q$-ary Hamming code with redundancy $r$, denoted by $\mathcal{H}_q(r)$, is a linear code with parameters $\left[n,n-r,3\right]$ (note that the number of redundant bits is $r$). In terms of graph theory, the Hamming code $\mathcal{H}_q(r)$ is an efficient dominating set of the graph $\Gamma\left(n,q\right)$. The following example illustrates the Hamming code in the graph and the construction of $\Gamma(m,q)$.

\begin{example}\label{ex:hamming_cube_1}
    Consider the graph $\Gamma(3,2)$, which corresponds to the cube in Figure~\ref{fig:Hammingcube}. In this case, $r=2$, which means that we encode $1$ bit by using two more redundant bits. The corresponding Hamming code in this case is 
    $$
    \mathcal{H}_2(2)=\{000,111\}.
    $$
    The closed neighborhoods $\Nt[000]$ and $\Nt[111]$ form a partition of the  graph $\Gamma(3,2)$. 
    
\begin{figure}
    \centering
    \begin{tikzpicture}[scale=2]

% Front square
\node[text=red] (000) at (0,0) {$000$};
\node (100) at (1,0) {$100$};
\node[text=blue] (110) at (1,1) {$110$};
\node (010) at (0,1) {$010$};

% Back square
\node (001) at (0.4,0.4) {$001$};
\node[text=blue] (101) at (1.4,0.4) {$101$};
\node[text=red] (111) at (1.4,1.4) {$111$};
\node[text=blue] (011) at (0.4,1.4) {$011$};

% Front edges
\draw (000)--(100)--(110)--(010)--(000);

% Back edges
\draw (001)--(101)--(111)--(011)--(001);

% Connecting edges
\draw (000)--(001);
\draw (100)--(101);
\draw (110)--(111);
\draw (010)--(011);

\end{tikzpicture}
    \caption{The vertices in red compose the Hamming code $\mathcal{H}_2(2)$.}
    \label{fig:Hammingcube}
\end{figure}

\end{example}

We now use well-known facts in coding theory to compute the domination number, cover number, and independence number of the Hamming graph $\Gamma\left(n,q\right)$. We also use these results to bound the $\vt$-number of $\Gamma\left(n,q\right)$ in terms of the redundancy of the Hamming code $\mathcal{H}_q(r)$. 

\begin{lemma}\label{l:domination_hamming}
Take $n = \frac{q^r-1}{q-1}$, where $q$ is a prime power and $r \geq 2$.
We have that 
$$
\gamma\left(\Gamma\left(n,q\right)\right)=q^{n-r} \quad \text{ and } \quad
\gamma\left(\Gamma\left(m,q\right)\right)>\frac{q^m}{1+m(q-1)}
$$
if $m$ is not of the form of $\frac{q^r-1}{q-1}$.
\end{lemma}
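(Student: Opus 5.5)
The plan is a counting argument on closed neighborhoods, which gives a lower bound on $\gamma$ that is attained exactly when an efficient dominating set (equivalently, a perfect $1$-error-correcting code) exists. In $\Gamma(m,q)$ every vertex $x\in\mathbb{F}_q^m$ has exactly $m(q-1)$ neighbors, since there are $m$ coordinates to change and $q-1$ choices for the new entry. So every closed neighborhood has size $1+m(q-1)$, and $\Nt[x]=S_1(x)$ is the Hamming ball of radius $1$. If $D$ is any dominating set, then $\mathbb{F}_q^m=\bigcup_{x\in D}\Nt[x]$, hence
\[
q^m\leq \sum_{x\in D}|\Nt[x]| = |D|\,(1+m(q-1)),
\]
and therefore $\gamma(\Gamma(m,q))\geq q^m/(1+m(q-1))$. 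Equality holds exactly when the closed neighborhoods of $D$ are pairwise disjoint, that is, when $D$ is an efficient dominating set.

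For $n=\frac{q^r-1}{q-1}$ we have $1+n(q-1)=q^r$, so the lower bound becomes $q^{n-r}$. For the matching upper bound, I will use the Hamming code $\mathcal{H}_q(r)$. It is an $[n,n-r,3]$ code, so $|\mathcal{H}_q(r)|=q^{n-r}$ and $t=1$. Its balls of radius $1$ are pairwise disjoint because the minimum distance is $3$. Their total size is $q^{n-r}\cdot q^r=q^n$, so they cover $\mathbb{F}_q^n$; this is precisely equality in Theorem~\ref{t:sphere_packing}. Thus $\mathcal{H}_q(r)$ is a dominating (indeed efficient dominating) set of size $q^{n-r}$, and so $\gamma(\Gamma(n,q))=q^{n-r}$.

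For the strict inequality when $m$ is not of the form $\frac{q^r-1}{q-1}$, suppose equality held in the counting bound. Then some dominating set $D$ has $|D|(1+m(q-1))=q^m$, and by the equality case above $D$ is an efficient dominating set. In other words, $D$ is a (possibly nonlinear) perfect $1$-error-correcting code in $\mathbb{F}_q^m$. This is the main obstacle.

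The cheapest route I see is arithmetic. From $|D|(1+m(q-1))=q^m$, the integer $1+m(q-1)$ divides $q^m$. When $q=p^s$ is a prime power, this forces $1+m(q-1)=p^k$ for some $k$. For $q$ prime this already gives $m=\frac{q^k-1}{q-1}$, a contradiction. For general prime powers, $p^k\equiv1\pmod{q-1}$ does not immediately force $k$ to be a multiple of $s$. To close that gap I would invoke the classification of perfect codes by Tiet\"av\"ainen (and van Lint), cited in the introduction: nontrivial perfect $1$-error-correcting codes over $\mathbb{F}_q$ exist only for lengths $m=\frac{q^r-1}{q-1}$, which have the Hamming parameters. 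The trivial case of a one-word code with $1+m(q-1)=q^m$ only occurs when $m=1$, and $m=1$ is itself of the form $\frac{q^1-1}{q-1}$. Either way we reach a contradiction, so the inequality is strict.
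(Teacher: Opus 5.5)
Your proof is correct and follows the paper's route: the radius-$1$ ball-counting (sphere-packing) bound gives $\gamma(\Gamma(m,q))\geq q^m/(1+m(q-1))$, the Hamming code $\mathcal{H}_q(r)$ attains it when $m=n$, and strictness for other $m$ comes from the nonexistence of perfect $1$-error-correcting codes. In fact your arithmetic route already works for every prime power $q=p^s$, not just prime $q$: since $\gcd(p^a-1,p^b-1)=p^{\gcd(a,b)}-1$, the condition $p^s-1\mid p^k-1$ forces $s\mid k$, so $1+m(q-1)=q^{k/s}$, and the appeal to Tiet\"{a}v\"{a}inen's classification (which the paper also makes) is not needed.
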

\begin{proof}
As the Hamming code $\mathcal{H}_q(r)$ is a minimal dominating set, we get \[\gamma(\Gamma(n,q)) \leq q^{n-r}.\]
Assume we have a minimal dominating set $D$ with $|D|< q^{n-r}$. Then, $|\Nt[D]|=q^{n}$. However, as $\Nt[c]=q^r$ for any $c\in\Gamma(n,q)$, which is the size of a Hamming ball with radius 1, we have the bound $|\Nt[D]|\leq  q^r |D|<q^{n}$, which is a contradiction. Observe that the last inequality follows from the sphere packing bound for radius $1$, see Theorem \ref{t:sphere_packing}, and the non-existence of perfect codes with distance $3$ if $m$ is not of the form of $\frac{q^r-1}{q-1}$. 
\end{proof}

\begin{lemma}\label{l:hamming_cover_independence}
For any integer $m$, we have 
\[
i\left(\Gamma\left(m,q \right) \right)=q^{m-1} \text{ and }\tau\left(\Gamma\left(m,q\right)\right)=q^{m-1}(q-1).
\]
\end{lemma}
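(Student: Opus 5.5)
The plan is to prove the two bounds $i(\Gamma(m,q))\geq q^{m-1}$ and $i(\Gamma(m,q))\leq q^{m-1}$ separately. The value of $\tau$ then follows from the standard fact that the complement of a maximum independent set is a minimum vertex cover, so that
\[
\tau(\Gamma(m,q)) = q^m - i(\Gamma(m,q)).
\]
If $i(\Gamma(m,q))=q^{m-1}$, this gives $\tau(\Gamma(m,q))=q^m-q^{m-1}=q^{m-1}(q-1)$.

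For the lower bound, I would exhibit an explicit independent set of size $q^{m-1}$. An independent set in $\Gamma(m,q)$ is exactly a code in $\mathbb{F}_q^m$ with minimum distance at least $2$. The natural choice is the parity-check code
\[
C=\{x\in\mathbb{F}_q^m \mid x_1+\cdots+x_m=0\}.
\]
It is a hyperplane, so $|C|=q^{m-1}$. If two distinct elements $x,y\in C$ differed in exactly one coordinate $j$, then subtracting their defining sums would give $x_j-y_j=0$, a contradiction. Hence $C$ is independent.

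For the upper bound, I would use a partition of the vertex set into cliques. For each $z\in\mathbb{F}_q^{m-1}$, consider the $q$ vectors obtained by fixing the last $m-1$ coordinates equal to $z$ and letting the first coordinate vary over $\mathbb{F}_q$. Any two of these vectors differ only in the first coordinate, so each such fiber is a copy of $K_q$ in $\Gamma(m,q)$. There are $q^{m-1}$ fibers, and they partition $V(\Gamma(m,q))$. An independent set meets each clique in at most one vertex, so $i(\Gamma(m,q))\leq q^{m-1}$. Equivalently, this is the Singleton bound for codes of minimum distance $2$: projecting onto the last $m-1$ coordinates is injective on such a code.

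No step is a serious obstacle. The only points needing care are the edge cases $m=1$ and general $q$. When $m=1$, the graph is $K_q$, with $i=1=q^0$ and $\tau=q-1$, which agrees with both formulas. The complement argument for $\tau$ is valid because $D$ is independent exactly when $V\setminus D$ is a vertex cover, so that maximum independent sets and minimum vertex covers correspond under complementation.
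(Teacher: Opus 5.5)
Your proof is correct and follows the paper's argument: the zero-sum hyperplane $\{x : x_1+\cdots+x_m=0\}$ gives an independent set of size $q^{m-1}$, the partition into $q^{m-1}$ cliques (obtained by fixing $m-1$ coordinates) gives the matching upper bound, and complementation gives $\tau$. You also verify explicitly that the hyperplane is independent, a step the paper only asserts.
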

\begin{proof}
Let $L$ be the set of elements in $\mathbb{F}_q^m$ whose sum of their entries is equal to $0$. Then, $|L|=q^{m-1}$ and $L$ is a maximal independent set. Moreover, $L$ is a maximum independent set. Indeed, we can split the vertices of $\Gamma\left(m,q \right)$ into $q^{m-1}$ disjoint sets by fixing their first $m-1$ entries. The vertices in each of those sets are connected, since they are at Hamming distance 1 apart. Therefore, a maximal independent set can have at most $1$ vertex from each of those sets, that is, its size is bounded by $q^{m-1}$. 

Since $L$ is independent if and only if $V\setminus L$ is a vertex cover, we have $i\left(\Gamma\left(m,q \right) \right)+ \tau\left(\Gamma\left(m,q\right)\right)=q^m$, which gives the result.
\end{proof}

\begin{theorem}\label{thm:Hammingdomination}
Take $n = \frac{q^r-1}{q-1}$, where $q$ is a prime power and $r \geq 2$. Then, 
$$
q^{n-r}\leq \vt(\Nc_{\Gamma(n,q)})\leq q^{n-1}(q-1).
$$ 
\end{theorem}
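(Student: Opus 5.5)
The plan is to sandwich $\vt(\Nc_{\Gamma(n,q)})$ between two graph invariants via Theorem~\ref{thm:cover-v}, and then evaluate both invariants on the Hamming graph using the two preceding lemmas. Since $\Gamma(n,q)$ is connected (any two vectors are joined by a path changing one coordinate at a time), Theorem~\ref{thm:cover-v} applies and gives
\[
\gamma(\Gamma(n,q))\leq \vt(\Nc_{\Gamma(n,q)})\leq \tau(\Gamma(n,q)).
\]

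For the lower bound, I will invoke Lemma~\ref{l:domination_hamming} with $n=\frac{q^r-1}{q-1}$. It gives $\gamma(\Gamma(n,q))=q^{n-r}$, which is exactly the cardinality of the Hamming code $\mathcal{H}_q(r)$. The reason is that $\mathcal{H}_q(r)$ is an efficient dominating set, and a radius-one Hamming ball has exactly $1+n(q-1)=q^r$ points, so no smaller dominating set can exist. For the upper bound, I will apply Lemma~\ref{l:hamming_cover_independence} with $m=n$, which gives $\tau(\Gamma(n,q))=q^{n-1}(q-1)$. Substituting both values into the displayed inequality yields the statement.

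The only step needing care is that the hypotheses of the cited results hold. Connectedness of $\Gamma(n,q)$ is immediate. In Lemma~\ref{l:domination_hamming}, the relevant point is that $|\Nt[c]|=1+n(q-1)=q^r$ for this $n$. I regard this as the main point to verify, although it is a routine check of the sphere-packing count. No further obstacle is expected, since the theorem is a direct corollary of these earlier results.
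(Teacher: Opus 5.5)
Your proposal is correct and matches the paper's proof, which likewise combines $\gamma(G)\le\vt(\Nc_G)\le\tau(G)$ from Theorem~\ref{thm:cover-v} with $\gamma(\Gamma(n,q))=q^{n-r}$ from Lemma~\ref{l:domination_hamming} and $\tau(\Gamma(n,q))=q^{n-1}(q-1)$ from Lemma~\ref{l:hamming_cover_independence}. Your explicit checks that $\Gamma(n,q)$ is connected and that $1+n(q-1)=q^r$ are small additions the paper leaves implicit.
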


\begin{proof}
It follows from Theorem \ref{thm:cover-v} and Lemmas \ref{l:domination_hamming} and \ref{l:hamming_cover_independence}. 
\end{proof}

We use the next two examples to illustrate that the previous upper bound is sharp.  
\begin{example}\label{ex:hamming_cube_2}
Following the setting from Example~\ref{ex:hamming_cube_1}, using Theorem~\ref{thm:Hammingdomination}, we get
    $$
    2 \leq \vt(\Nc_{\Gamma(3,2)})\leq 4.
    $$ 
Using the code from Appendix \ref{sec:computer_prog}, one can check that the upper bound is sharp in this case, i.e., $\vt(\Nc_{\Gamma(3,2)})=4$. 
\end{example}

\begin{example}
The well-known Hamming code $\mathcal{H}_2(3)$ is an efficient dominating set of the graph $\Gamma(7,2)$~\cite{hammingcodes}. The code $\mathcal{H}_2(3)$ uses $3$ redundant bits to encode $4$ bits, this means it has parameters $[7,4,3]$. The Hamming code can be represented by the following codewords
\begin{align*}
\mathcal{H}_2(3)=\{
&0000000,\,
0001011,\,
0010111,\,
0011100,\,
0100110,\,
0101101,\,
0110001,\,
0111010,\,\\
&1000101,\,
1001110,\,
1010010,\,
1011001,\,
1100011,\,
1101000,\,
1110100,\,
1111111
\}.    
\end{align*}
In Figure~\ref{fig:hammingcode}, we display part of the graph $\Gamma(7,2)$ that represents two close neighborhoods of two of the elements in $\mathcal{H}_2(3)$. Using Theorem~\ref{thm:Hammingdomination}, we obtain that 
$$
16=2^4\leq\vt(\Nc_{\Gamma(7,2)})\leq 2^{6}=64.
$$
 
\begin{figure}
    \centering
    \begin{tikzpicture}[scale=1.6]

%--------------------------------
% First neighborhood: top left
%--------------------------------
\node[text=red] (c1) at (-3,2) {$0000000$};

\node (a1) at (-1.8,2) {$1000000$};
\node (a2) at (-2.1,2.9) {$0001000$};
\node (a3) at (-3,3.2) {$0100000$};
\node (a4) at (-3.9,2.9) {$0010000$};
\node (a5) at (-4.2,2) {$0000100$};
\node (a6) at (-3.9,1.1) {$0000010$};
\node (a7) at (-3,0.8) {$0000001$};

\draw (c1)--(a1);
\draw (c1)--(a2);
\draw (c1)--(a3);
\draw (c1)--(a4);
\draw (c1)--(a5);
\draw (c1)--(a6);
\draw (c1)--(a7);

%--------------------------------
% Second neighborhood: top right
%--------------------------------
\node[text=red] (c2) at (3,2) {$0001011$};

\node (b1) at (4.2,2) {$1001011$};
\node (b2) at (3.9,2.9) {$0011011$};
\node (b3) at (3,3.2) {$0101011$};
\node (b4) at (2.1,2.9) {$0000011$};
\node (b5) at (1.8,2) {$0001001$};
\node (b6) at (2.1,1.1) {$0001010$};
\node (b7) at (3,0.8) {$0001111$};

\draw (c2)--(b1);
\draw (c2)--(b2);
\draw (c2)--(b3);
\draw (c2)--(b4);
\draw (c2)--(b5);
\draw (c2)--(b6);
\draw (c2)--(b7);

%--------------------------------
% Connecting edges between neighborhoods
%--------------------------------
\draw (a2)--(b5); % 0010000 -- 0010010
\draw (a2)--(b6); % 0010000 -- 0010010
\draw (b5)--(a7);
\draw (b4)--(a7);

\end{tikzpicture}
    \caption{Two close neighborhoods of elements in $\mathcal{H}_2(3)$ with some edges between the two neighborhoods.}
    \label{fig:hammingcode}
\end{figure}

\end{example}

% \bibliographystyle{alpha}
% \bibliography{References}
\newcommand{\etalchar}[1]{$^{#1}$}

\appendix
\section{Code for examples}\label{sec:computer_prog}
In this section, we provide the Sage code for the computations that appear in this paper~\cite{sagemath}. For the computation of the $\vt$-number using Theorem \ref{t:v_number_formula}, we have taken into account Remark \ref{r:minimal_c} to make the computation more manageable. We first provide the basic functions.

\begin{minted}[frame=single, breaklines]{python}
def closed_neighborhood(G, S):
    """Returns the closed neighborhood N[S] of a set of vertices S."""
    N_S = set(S)
    for v in S:
        N_S.update(G.neighbors(v))
    return N_S
def CNI(G, base_ring=QQ):
    """Constructs the closed neighborhood ideal of a graph G."""
    V = G.vertices(sort=True)
    var_names = ['x_{}'.format(i) for i in range(len(V))]
    
    R = PolynomialRing(base_ring, var_names)
    gens = R.gens()
    
    var_map = {V[i]: gens[i] for i in range(len(V))}
    
    ideal_generators = []
    
    for v in V:
        c_neighborhood = closed_neighborhood(G, {v})
        monomial = R(1)
        for u in c_neighborhood:
            monomial *= var_map[u]
        ideal_generators.append(monomial)
    return R.ideal(ideal_generators)
def v_number(I):
    """Computes the v-number of a monomial ideal I."""
    ass_primes = I.associated_primes()
    v_num = infinity
    
    for P in ass_primes:
        colon_ideal = I.quotient(P)
        
        # Check the minimal generators of the colon ideal
        for f in colon_ideal.interreduced_basis():
            if I.quotient(ideal(f)) == P:
                d = f.degree()
                if d < v_num:
                    v_num = d          
    return v_num
def v_number_formula(G):
    """ Computes the v-number using the formula from the paper."""
    # Precompute all closed neighborhoods
    V = set(G.vertices())
    N_cache = {v: closed_neighborhood(G, {v}) for v in V}
    
    min_val = infinity
    best_D = None
    best_U = None
    
    for D in G.minimal_dominating_sets():
        # Group private neighbors in terms of vertices of D
        PN_dict = {d: [] for d in D}
        
        for v in V:
            intersect = N_cache[v].intersection(D)
            if len(intersect) == 1:
                # v is a private neighbor
                d = next(iter(intersect)) # extract the only element in the intersection
                PN_dict[d].append(v)
            
        for U_tuple in itertools.product(*PN_dict.values()): # Requires itertools
            
            # Compute N[U]
            N_U = set()
            for u in U_tuple:
                N_U.update(N_cache[u])
                
            current_val = len(N_U) - len(D)
            
            if current_val < min_val:
                min_val = current_val
                best_D = D
                best_U = set(U_tuple)
                
    return min_val, best_D, best_U
\end{minted}

Now we provide the code to obtain the results from Example \ref{ex:regularity}.

\begin{minted}[frame=single, breaklines]{python}
n1 = 5
n2 = 3 #n1>=n2>=2

import itertools
from sage.all import singular
singular.lib('mregular.lib')

G = Graph()
G = graphs.CompleteBipartiteGraph(n1,n2)

# Generate the ideal
NI = CNI(G)

matching_edges = G.matching()
matching_number = len(matching_edges)

v_cover = G.vertex_cover()
v_cover_size = len(v_cover)
vn=v_number(NI)
reg=singular.regIdeal(NI)-1

print('\nMatching number: ', matching_number, 'Minimal vertex cover: ', v_cover_size, 'v-number: ', vn,'Regularity: ',reg)
val, D_min, U_min = v_number_formula(G)
print('Formula v-number: ', val, 'D_min: ',D_min, 'U_min: ', U_min, 'N(U) :', closed_neighborhood(G, U_min))     
\end{minted}

\end{document}